 \def\NZQ{\Bbb}               
 \def\NN{{\NZQ N}}
 \def\ZZ{{\NZQ Z}}
 \def\FF{{\NZQ F}}
 \def\frk{\frak}               
 \def\rr{{\frk r}}
 \def\ss{{\frk s}}
 \def\mm{{\frk m}}
 \def\ab{{\bold a}}
 \def\bb{{\bold b}}
 \def\xb{{\bold x}}
 \def\cb{{\bold c}}
 \def\eb{{\bold e}}
 \def\tb{{\bold t}}
 \def\qb{{\bold q}}
 \def\opn#1#2{\def#1{\operatorname{#2}}} 
 \opn\chara{char} \opn\length{\ell} \opn\pd{pd} \opn\rk{rk}
 \opn\projdim{proj\,dim} \opn\injdim{inj\,dim} \opn\rank{rank}
 \opn\depth{depth} \opn\grade{grade} \opn\height{height}
 \opn\embdim{emb\,dim} \opn\codim{codim}
 \opn\Tr{Tr} \opn\bigrank{big\,rank}
 \opn\superheight{superheight}\opn\lcm{lcm}
 \opn\trdeg{tr\,deg}
 \opn\reg{reg} \opn\lreg{lreg} \opn\ini{in} \opn\lpd{lpd}
 \opn\size{size} \opn\sdepth{sdepth}
 \opn\link{link}\opn\fdepth{fdepth}\opn\lex{lex} \opn\op{op}
 \opn\div{div} \opn\Div{Div} \opn\cl{cl} \opn\Cl{Cl}
 \opn\Spec{Spec} \opn\Supp{Supp} \opn\supp{supp} \opn\Sing{Sing}
 \opn\Ass{Ass} \opn\Min{Min}\opn\Mon{Mon}
 \opn\Ann{Ann} \opn\Rad{Rad} \opn\Soc{Soc}
 \opn\Im{Im} \opn\Ker{Ker} \opn\Coker{Coker} \opn\Am{Am} \opn\ann{ann}
 \opn\Hom{Hom} \opn\Tor{Tor} \opn\Ext{Ext} \opn\End{End}
 \opn\Aut{Aut} \opn\id{id}
 \opn\nat{nat}
 \opn\pff{pf}
 \opn\Pf{Pf} \opn\GL{GL} \opn\SL{SL} \opn\mod{mod} \opn\ord{ord}
 \opn\Gin{Gin} \opn\Hilb{Hilb}\opn\sort{sort}
 \opn\aff{aff} \opn
\opn\relint{relint} \opn\st{st}
 \opn\lk{lk} \opn\cn{cn} \opn\core{core} \opn\vol{vol}
 \opn\link{link} \opn\star{star}\opn\lex{lex}\opn\set{set}
 \opn\gr{gr}
 \def\pot#1#2{#1[\kern-0.28ex[#2]\kern-0.28ex]}
 \opn\dirlim{\underrightarrow{\lim}}
 \opn\inivlim{\underleftarrow{\lim}}
 \let\to=\rightarrow
 \def\Implies{\ifmmode\Longrightarrow \else
         \unskip${}\Longrightarrow{}$\ignorespaces\fi}
 \def\implies{\ifmmode\Rightarrow \else
         \unskip${}\Rightarrow{}$\ignorespaces\fi}
 \def\iff{\ifmmode\Longleftrightarrow \else
         \unskip${}\Longleftrightarrow{}$\ignorespaces\fi}
 \newtheorem{Theorem}{Theorem}[section]
 \newtheorem{Lemma}[Theorem]{Lemma}
 \newtheorem{Corollary}[Theorem]{Corollary}
 \newtheorem{Proposition}[Theorem]{Proposition}
 \newtheorem{Remark}[Theorem]{Remark}
 \newtheorem{Example}[Theorem]{Example}
 \let\epsilon\varepsilon
 \let\kappa=\varkappa
 \def\qed{\ifhmode\textqed\fi
       \ifmmode\ifinner\quad\qedsymbol\else\dispqed\fi\fi}
 \def\textqed{\unskip\nobreak\penalty50
        \hskip2em\hbox{}\nobreak\hfil\qedsymbol
        \parfillskip=0pt \finalhyphendemerits=0}
 \def\dispqed{\rlap{\qquad\qedsymbol}}
 \opn\dis{dis}
 \def\pnt{{\raise0.5mm\hbox{\large\bf.}}}
 \opn\Lex{Lex}
 \def\ModN{{\bf  Mod}_{S}^{\mathbb{N}^n}}
 \def\ModZ{{\bf  Mod}_{S}^{\mathbb{Z}^n}}
 \def\One{{\bf 1}}
\newcommand\Zero{{\bf 0}}
\newcommand\Modtb{{\bf  Mod}_{S}^{\tb}}
\newcommand\Modone{{\bf Mod}_S^\One}
\newcommand\grHom{\underline{\Hom}}
\newcommand\grExt{\underline{\Ext}}
\newcommand\Dc{\mathcal{D}}
\newcommand\Ac{\mathcal{A}}
\newcommand\DD{\mathbf{D}}
\newcommand\longto{\longrightarrow}
\opn\Proj{Proj} \opn\Inj{Inj}
\begin{document}

 \title {On the radical of multigraded modules}
 \author {Viviana Ene}
 \address{Faculty of Mathematics and Computer Science, Ovidius University, Bd. Mamaia 124, 900527 Constanta, Romania
   \newline \indent Institute of Mathematics of the Romanian Academy, P.O. Box 1-764, RO-014700, Buchaest, Romania}
 \email{vivian@univ-ovidius.ro}
 \author{Ryota Okazaki}
 \thanks{The first author was partially supported  by the JSPS Invitation Fellowship Programs for Research in Japan and by  the grant UEFISCDI,  PN-II-ID-PCE- 2011-3-1023. The second author is partially supported by JST, CREST and also by KAKENHI (no. 20624109)}
 \address{Department of Pure and Applied Mathematics, 
Graduate School of Information Science and Technology,
Osaka University, Toyonaka, Osaka 
560-0043, Japan}
 \email{okazaki@ist.osaka-u.ac.jp}

 \begin{abstract}
We define a functor $\rr^\ast$ from the category of positively determined modules to the category of squarefree modules which plays the role of passing from a monomial ideal to its radical. By using this functor, we generalize several  results on properties that are shared by a monomial ideal and its radical. Moreover, we study the connection of $\rr^\ast$ to the Alexander duality and Auslander-Reiten translate functor. 
 \end{abstract}
 

\keywords{Multigraded modules, squarefree modules, Cohen-Macaulay modules, Alexander duality, Auslander-Reiten translate}

 \maketitle

 \section*{Introduction}
 
 Our original motivation for this work was to generalize some of the results obtained by Herzog, Takayama, and Terai in \cite{HTT}. They proved that 
 many properties of a monomial ideal pass to its radical. It is well known that every monomial ideal $I$ in the polynomial ring $S=K[x_1,\ldots,x_n]$ 
 over a field $K$ is a positively $\tb$-determined $S$-module for an appropriate $\tb\in \NN^n$ as it was defined by Miller in \cite{Mill}. Thus, a 
 natural way to generalize the results of \cite{HTT} is to consider positively determined modules  instead of monomial ideals. We show that one may 
 define a functor $\rr^\ast$ from the category $\Modtb$ of positively $\tb$-determined modules to the category $\Modone$ of squarefree $S$-modules  
 which plays the role of passing from a monomial ideal to its radical. As it was shown in \cite{BF}, to any ordering preserving map $\qb:\NN^n\to 
 \NN^n$, one may associate a functor $\qb^\ast$ from the category of $\NN^n$-graded $S$-modules to itself which is defined as follows. For any $M\in 
 \ModN$ and $\ab\in \NN^n,$ $(\qb^\ast M)_{\ab}=M_{q(\ab)}$ and the $S$-module structure of $\qb^\ast M$ is given by the multiplication $(q^\ast 
 M)_{\bb}\stackrel{\cdot\xb^\ab}{\rightarrow}(q^\ast M)_{\bb+\ab}$ that maps every homogeneous element $y\in (q^\ast M)_{\bb}$ to 
$\xb^{q(\ab+\bb)-q(\bb)}y.$ If $f:M\to N$ is a graded morphism of $\NN^n$-graded $S$-modules, then 
for every $\ab\in \NN^n,$ the $\ab$-degree component of $q^\ast f: q^\ast M \to q^\ast N$ is $f_{q(\ab)}.$
In \cite{BF} it is shown that $q^\ast$ is an exact functor. In Section~\ref{radicalsection}, we have considered the following map. For $\tb\in \NN^n$ with $\tb\geq \One$ where $\One=(1,\ldots,1), $ we define $\rr: \NN^n \to \NN^n$  by $\rr(\ab)=(r_i(a_i))_{1\leq i\leq n}$ where 
\[
r_i(a_i)=\left\{ 
\begin{array}{ll}
	t_i, & \text{ if } a_i>0,\\
	0, &  \text{ otherwise. }
\end{array}\right.
\] This is an ordering preserving map which induces a functor $\rr^\ast$ which depends on $\tb$ from the category $\ModN$ to itself. We showed in Section~\ref{radicalsection} that this functor transports the category $\Modtb$ into the category of squrefree modules and, moreover, for any monomial ideal $I\subset S,$ we have $\rr^\ast I\cong \sqrt{I}$ as $S$-modules. 

As it was explained in \cite{HTT}, the Betti numbers do not increase when one passes from a monomial ideal to its radical. We show in 
Theorem~\ref{bettiradical} that passing from a positively $\tb$-determined module to its ''radical'' module has a similar behavior. In particular,  one 
obtains $\depth M\leq \depth\rr^\ast M$ for any $M\in \Modtb$. 

Unlike the monomial case, for a positively $\tb$-determined module $M,$ we show in Corollary~\ref{dim}, that one has only the inequality $\dim 
\rr^\ast M\leq \dim M.$ Easy examples show that the inequality may be strict. By using the inequalities between depth and Krull dimension, we show in 
Theorem~\ref{CM} that the (sequentailly) Cohen-Macaulay property of $M$ passes to the ''radical'' of $M$ for any postively $\tb$-determined module $M$with $\rr^\ast M\neq 0.$

In Section~\ref{extsection} we study the connection between the functor $\rr^\ast$ and $\Ext$. The main result of the section is Theorem~\ref{ext} 
which states that for every module $M\in \Modtb$ there exists a natural isomorphism $
\grExt^p_S(M, \omega_S)_{\geq \Zero} \cong \grExt^p_S(\rr^\ast(M),\omega_S)$ for all $p,$ where $\omega_S$ is the canonical module of $S.$  In 
particular, for any Cohen-Macaulay ideal $I\subset S$ such that $S/I$ is a positively $\tb$-determined module, it follows that the canonical module
of $S/\sqrt{I}$ is isomorphic to the positive part of $\omega_{S/I}.$  From Theorem~\ref{ext}, under an additional  condition, it follows that if 
$M\in \Modtb$ is a  generalized Cohen-Macaulay (Buchsbaum) module, then $\rr^\ast M$ shares the same property.

Finally, in the last two sections we show how our ''radical'' functor is connected to the Alexander duality (Proposition~\ref{prop:r_and_A}) and Auslander-Reiten translate functor (Proposition~\ref{ART}).

\section*{Acknowledgment}

A part of this research was done when the second author visited the Faculty of Mathematics and Informatics of Ovidius University.
He is deeply grateful for warm hospitality.

 \section{The $\rr^\ast$ functor and first applications}
 \label{radicalsection}
 
In this section we define the $\rr^\ast$ functor on the category $\ModN$ of the $\NN^n$-graded $S$-modules where $S=K[x_1,\ldots,x_n]$ is the 
polynomial ring in $n$ variables over a field $K.$
We first recall the basic notions and set the notation.
Let  $\NN$ be the set of non-negative integers.
For $\ab=(a_1,\ldots,a_n)\in \NN^n$ we set $\xb^\ab=x_1^{a_1}\cdots x_n^{a_n}$ and call $\ab$ the degree of the monomial  $\xb^\ab$. $\nu_i(u)$ denotes the exponent of  variable $x_i$ in the monomial $u\in S$.  Let 
 $\leq$ be the partial order on $\ZZ^n$ which is defined as follows. If $\ab=(a_1,\ldots,a_n),\bb=(b_1,\ldots,b_n)\in\ZZ^n$, then $\ab\leq \bb$ 
if $a_i\leq b_i$  for $1\leq i\leq n.$ Of course, this order induces a partial order on $\NN^n.$

Let $\tb\in \NN^n$ with $\tb\geq \One,$ where $\One=(1,\ldots,1).$ According to \cite{Mill}, a $\ZZ^n$-graded $S$-module $M$ is called
{\em positively $\tb$-determined} if it is finitely generated, $\NN^n$-graded,
and if the multiplication map $M_{\ab}\stackrel{x_i}{\rightarrow}{M_{\ab+\eb_i}}$ is
an isomorphism of $K$-vector spaces whenever $a_i \ge t_i$. Here, $\eb_i$ 
is the vector of $\ZZ^n$ with its $i$-th component equal to $1$ and all the others equal to $0.$ 
A monomial ideal $I$ is positively $\tb$-determined if and only if
it is generated by some elements $x^\ab$ with $\Zero \le \ab \le \tb$.
Every finitely generated $\NN^n$-graded $S$-module
is positively $\tb$-determined for some $\tb \gg \One$.
In particular, for any 2 monomial ideals $I,J$ with $J \supseteq I$,
$J/I$ is positively $\tb$-determined for some $\tb \gg \One$.

Let $\Modtb$ be the full subcategory of $\ModZ$
consisting of positively $\tb$-determined $S$-modules.

According to \cite{BF}, with any order preserving map $q:\ZZ^n\to \ZZ^n,$ one may associate a functor $q^{\ast}: \ModZ \to \ModZ$. Since we are 
concerned only with $\NN^n$-graded modules, that is, $\ZZ^n$-graded modules whose components of degree $\ab\in \ZZ^n\setminus \NN^n$ are all zero, we 
may consider the map $q:\NN^n\to \NN^n.$

$q^\ast$ acts on modules and morphisms as follows. For a  $\ZZ^n$-graded $S$-module $M,$ the $\ab$-degree component of $q^\ast M$ is $(q^\ast 
M)_{\ab}=M_{q(\ab)}.$ The multiplication which gives the $S$-module structure of $q^\ast M$ is the following. For a monomial $\xb^\ab\in S,$ the map 
$(q^\ast M)_{\bb}\stackrel{\cdot\xb^\ab}{\rightarrow}(q^\ast M)_{\bb+\ab}$ maps every homogeneous element $y\in (q^\ast M)_{\bb}$ to 
$\xb^{q(\ab+\bb)-q(\bb)}y.$

We describe now the action of $q^\ast$ on the morphisms of the category $\ModZ$ following \cite{BF}. If $f:M\to N$ is a graded morphism of $\ZZ^n$-graded $S$-modules, then 
for every $\ab\in \ZZ^n,$ the $\ab$-degree component of $q^\ast f: q^\ast M \to q^\ast N$ is $f_{q(\ab)}.$
In \cite{BF} it is shown that $q^\ast$ is an exact functor.

We consider now the following order preserving map. Let $\tb \in \NN^n, \tb\geq \One,$ and $\rr: \NN^n \to \NN^n$  given by $\rr(\ab)=(r_i(a_i))_{1\leq i\leq n}$ where 
\[
r_i(a_i)=\left\{ 
\begin{array}{ll}
	t_i, & \text{ if } a_i>0,\\
	0, &  \text{ otherwise. }
\end{array}\right.
\]
It is easily seen that $\rr$ is an order preserving map, hence we may consider the functor $\rr^\ast: \ModN\to \ModN$ associated with $\rr.$

\begin{Proposition}\label{sqfree}
Let $M$ be a positively $\tb$-determined $\NN^n$-graded $S$-module. Then $\rr^\ast M$ is a positively $\One$-determined module, that is, 
$\rr^\ast M$ is a squarefree $S$-module.
\end{Proposition}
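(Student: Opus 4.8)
The plan is to verify two things for $N:=\rr^\ast M$: first that $N$ is positively $\One$-determined (equivalently, squarefree), and second that it is still finitely generated and $\NN^n$-graded so that the notion even applies. Recall from Miller's definition that $N$ being positively $\One$-determined means that $N$ is finitely generated, $\NN^n$-graded, and that the multiplication $N_{\ab}\xrightarrow{x_i}N_{\ab+\eb_i}$ is a $K$-isomorphism whenever $a_i\geq 1$. Since $\rr$ takes values in $\NN^n$ and $M$ is $\NN^n$-graded, every component $N_\ab=M_{\rr(\ab)}$ makes sense and vanishes for $\ab\notin\NN^n$; finite generation will follow once we understand the module structure, because $M$ has only finitely many nonzero components in the box $[\Zero,\tb]$ and $\rr$ has image contained in that box.

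Next I would analyze the multiplication map $N_\ab\xrightarrow{x_i}N_{\ab+\eb_i}$. By definition of $\rr^\ast$ this is the map $M_{\rr(\ab)}\to M_{\rr(\ab+\eb_i)}$ given by multiplication by $\xb^{\,\rr(\ab+\eb_i)-\rr(\ab)}$ in $M$. The key point is to compute the difference vector $\rr(\ab+\eb_i)-\rr(\ab)$ componentwise. Since $\rr$ acts coordinatewise and adding $\eb_i$ only changes the $i$-th coordinate, the two vectors agree in all coordinates $j\neq i$. In coordinate $i$ there are two cases: if $a_i\geq 1$, then $a_i+1\geq 1$ as well, so $r_i(a_i+1)=t_i=r_i(a_i)$ and the difference is $\Zero$; if $a_i=0$, then $r_i(1)=t_i$ while $r_i(0)=0$, so the difference is $t_i\eb_i$.

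The squarefreeness condition concerns exactly the first case $a_i\geq 1$: there the multiplication map $N_\ab\xrightarrow{x_i}N_{\ab+\eb_i}$ is multiplication by $\xb^{\Zero}=1$, i.e.\ it is literally the identity map $M_{\rr(\ab)}=M_{\rr(\ab)}$, hence a $K$-isomorphism. This establishes the defining isomorphism property. For the finite generation part, I would note that $\rr^\ast$ is exact (as recalled from \cite{BF}) and that $N$ is supported in degrees $\ab$ with $\rr(\ab)$ in the finite support of $M$; concretely, $N$ is generated in degrees $\ab\in\{0,1\}^n$ since for any coordinate with $a_i\geq 1$ one can pull back along the identity map just established, reducing $a_i$ to $1$ without changing the module component. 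I don't anticipate a serious obstacle here — the argument is essentially the coordinatewise computation of $\rr(\ab+\eb_i)-\rr(\ab)$ — but the one point to state carefully is why $N$ is finitely generated, for which the cleanest route is to exhibit $\{0,1\}^n$ as a generating set of degrees, using the identity-map observation to move any component down into that box.
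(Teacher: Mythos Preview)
Your proposal is correct and follows essentially the same approach as the paper: both compute $\rr(\ab+\eb_i)-\rr(\ab)$ coordinatewise and observe that when $a_i\geq 1$ this vector is $\Zero$, so the multiplication by $x_i$ on $\rr^\ast M$ is literally the identity map on $M_{\rr(\ab)}$. You are in fact more careful than the paper, which checks only the isomorphism condition and leaves finite generation implicit; your observation that $\{0,1\}^n$ is a generating set of degrees (via the same identity-map reduction) cleanly fills that in.
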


\begin{proof}
It is enough to show that, for any $\ab\in \NN^n$ and $i\in \supp(\ab),$ the multiplication map
\[
(\rr^\ast M)_{\ab} \stackrel{\cdot x_i}{\rightarrow}(\rr^\ast M)_{\ab+\eb_i}
\]
is an isomorphism of $K$-vector spaces. But this is almost obvious, since $\xb^{e_i}\cdot u=\xb^{\rr(\ab+\eb_i)-\rr(\ab)}u=u.$ The last equality is 
true  since $\supp(\xb^{\rr(\ab+\eb_i)-\rr(\ab)})=\emptyset$ for any $i\in \supp(\ab)$. Therefore, the multiplication by $x_i$ is the identity map, hence it is an isomorphism 
of vector  spaces.
\end{proof}

\begin{Proposition}\label{classicradical}
Let $\tb\in \NN^n$ with $\tb\geq \One,$ and let $I\subset S$ be a monomial ideal which is positively $\tb$-determined, that is, for every $u\in G(I),$ 
$\deg(u)\leq \tb.$ Then $\rr^\ast I\cong \sqrt{I}.$
\end{Proposition}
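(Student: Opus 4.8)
The plan is to compute $\rr^\ast I$ degree by degree and match it, as an $\NN^n$-graded $S$-module, with $\sqrt{I}$. Recall that $\sqrt{I}$ is the monomial ideal generated by $\{\xb^{\sqrt{\ab}} : \xb^{\ab}\in I\}$, where $\sqrt{\ab}$ is the squarefree vector with $\supp(\sqrt{\ab})=\supp(\ab)$; equivalently, $(\sqrt{I})_{\bb}=K$ if $\xb^{\sqrt{\ab}}$ divides $\xb^{\bb}$ for some $\xb^{\ab}\in I$, and is $0$ otherwise. On the other side, by definition of the $\rr^\ast$ functor, $(\rr^\ast I)_{\bb}=I_{\rr(\bb)}$, which is $K$ if $\xb^{\rr(\bb)}\in I$ and $0$ otherwise. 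So the first step is the purely combinatorial observation that $\xb^{\rr(\bb)}\in I \iff \xb^{\sqrt{\bb'}}\mid \xb^{\bb}$ for some $\xb^{\bb'}\in I$, i.e.\ that the two modules have the same Hilbert function in every multidegree.

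Next I would pin down the correspondence at the level of monomials. Since $I$ is positively $\tb$-determined, every generator $u\in G(I)$ satisfies $\Zero\le\deg(u)\le\tb$, so $\rr(\deg(u))=\sqrt{\deg(u)}\cdot(\text{componentwise }t_i\text{ on the support})$; more precisely $\rr(\ab)$ is the vector with $i$-th entry $t_i$ exactly on $\supp(\ab)$. The key finite check is: for a monomial $\xb^{\bb}$, we have $\xb^{\rr(\bb)}\in I$ if and only if there is $u\in G(I)$ with $\supp(u)\subseteq\supp(\bb)$, which by definition of $\sqrt{I}$ is exactly the condition $\xb^{\bb}\in\sqrt{I}$. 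The forward direction uses that $\Zero\le\deg(u)\le\tb$ together with $u\mid\xb^{\rr(\bb)}$ forces $\supp(u)\subseteq\supp(\rr(\bb))=\supp(\bb)$; the reverse direction uses that $\supp(u)\subseteq\supp(\bb)$ and $\deg(u)\le\tb$ give $\deg(u)\le\rr(\bb)$, hence $u\mid\xb^{\rr(\bb)}$.

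With the Hilbert functions matched, I would define the candidate isomorphism $\varphi\colon\rr^\ast I\to\sqrt{I}$ in each nonzero degree $\bb$ by sending a chosen basis element of $(\rr^\ast I)_{\bb}=I_{\rr(\bb)}$ to the corresponding basis element $\xb^{\bb}$ of $(\sqrt{I})_{\bb}$, and $\Zero$ to $\Zero$ elsewhere. The content to verify is $S$-linearity: for $\bb$ with $\xb^{\rr(\bb)}\in I$ and any variable $x_i$, the multiplication in $\rr^\ast I$ sends the generator of $(\rr^\ast I)_{\bb}$ to $\xb^{\rr(\bb+\eb_i)-\rr(\bb)}$ times the generator of $(\rr^\ast I)_{\bb+\eb_i}$; since $\rr(\bb+\eb_i)-\rr(\bb)$ is either $\Zero$ (when $i\in\supp(\bb)$) or $\eb_i$ scaled — actually $\rr(\bb+\eb_i)-\rr(\bb)=t_i\eb_i$ when $i\notin\supp(\bb)$, but in that regime $(\rr^\ast I)_{\bb}$ and $(\rr^\ast I)_{\bb+\eb_i}$ need not both be nonzero — one checks directly that this matches multiplication by $x_i$ on $\sqrt{I}$, where $x_i\cdot\xb^{\bb}=\xb^{\bb+\eb_i}$. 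The map is clearly bijective in each degree since both sides are $0$ or $1$-dimensional and they agree by the Hilbert-function computation.

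The main obstacle is the $S$-module structure compatibility in the "boundary" multidegrees, namely where $i\notin\supp(\bb)$: here the $\rr^\ast$-multiplication map $(\rr^\ast I)_{\bb}\xrightarrow{x_i}(\rr^\ast I)_{\bb+\eb_i}$ is, by the definition of $\qb^\ast$, multiplication by $\xb^{\rr(\bb+\eb_i)-\rr(\bb)}=\xb^{t_i\eb_i}$ inside $I$, and one must check this is nonzero precisely when the target degree of $\sqrt{I}$ is nonzero and that it is sent correctly by $\varphi$. The nontrivial point is that $\xb^{\rr(\bb)}\cdot\xb^{t_i\eb_i}=\xb^{\rr(\bb+\eb_i)}$, so the map is genuinely "multiplication by $x_i^{t_i}$" at the level of the underlying monomials of $I$, yet under the identification with $\sqrt{I}$ it becomes plain multiplication by $x_i$ — this is exactly the squarefree normalization that Proposition~\ref{sqfree} already encodes, so invoking that proposition streamlines the argument. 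Once this one type of degree is handled, naturality is immediate and the isomorphism $\rr^\ast I\cong\sqrt{I}$ follows.
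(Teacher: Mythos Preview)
Your proposal is correct and follows essentially the same strategy as the paper: first establish the degree-by-degree equivalence $\xb^{\rr(\bb)}\in I \iff \xb^{\bb}\in\sqrt{I}$, then write down the obvious degree-preserving bijection and verify $S$-linearity. The only cosmetic differences are that the paper proves the equivalence via the power definition of the radical (choosing $k$ with $\xb^{k\ab}\in I$) rather than via supports of generators, builds the map in the direction $\sqrt{I}\to\rr^\ast I$ by $\xb^{\ab}\mapsto\xb^{\rr(\ab)}$, and dispatches $S$-linearity in a single line $f(\xb^{\bb}\cdot\xb^{\ab})=\xb^{\rr(\ab+\bb)}=\xb^{\rr(\ab+\bb)-\rr(\ab)}\xb^{\rr(\ab)}=\xb^{\bb}\cdot f(\xb^{\ab})$ instead of your case split on whether $i\in\supp(\bb)$.
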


\begin{proof} 
Firstly, we claim that $\xb^\ab\in \sqrt{I}$ if and only if $\xb^{\rr(\ab)}\in I.$ Let us prove this claim. 
If $\xb^{\ab}\in \sqrt{I},$ then there exists $k\geq 1$ such that $\xb^{k\ab}\in I.$ Obviously, we may choose $k$ such that $ka_i\geq t_i$ for all 
$a_i>0.$ Then there exists $\xb^\bb\in G(I)$ such that $\xb^\bb\ |\ \xb^{k\ab},$ which implies that $\bb\leq k\ab.$ Since $I$ is positively 
$\tb$-determined, we also have $\bb\leq \tb.$ It then follows that $\bb\leq \rr(k\ab)=\rr(\ab)$ which implies that $\xb^\bb\ |\
\xb^{\rr(\ab)}$ and, therefore, 
$\xb^{\rr(\ab)}\in I.$

Conversely, let $\xb^{\rr(\ab)}\in I.$ We obviously may find $k\geq 1$ such that $k\ab\geq \rr(\ab),$ hence $\xb^{k\ab}\in I$ and, therefore, $\xb^{\ab}\in \sqrt{I},$ which ends the proof of our claim.

Let  $f:\sqrt{I}\to \rr^\ast I$ be the map given by $f=\oplus_{\ab}f_{\ab}$ where $f_{\ab}:(\sqrt{I})_\ab\to (\rr^\ast I)_{\ab}$ is defined by 
$f_{\ab}(\xb^\ab)=\xb^{\rr(\ab)}.$ The map  $f$ is obviously a graded isomorphism of $K$-vector spaces. We show that $f$ is an $S$-module isomorphism. Indeed,
for any $\ab, \bb,$ 
\[
f(\xb^\bb \cdot \xb^\ab)=\xb^{\rr(\ab+\bb)}= \xb^{\rr(\ab+\bb)-\rr(\ab)}\xb^{\rr(\ab)}= \xb^\bb\cdot f(\xb^\ab).
\]
\end{proof}

In order to state the first main result, we need a preparatory lemma. Before stating it, let us set some more notation. For $\ab\in \NN^n$, 
let  $S(-\ab)$ be the graded free $S$-module whose all graded components are obtained from those of $S$ by shifting with the vector $\ab,$ and 
let  $\sqrt{\ab}$  be the following vector of $\NN^n:$
\[
(\sqrt{\ab})_i=\left\{ 
\begin{array}{ll}
	1, & \text{ if } a_i>0,\\
	0, & \text{ if } a_i=0.
\end{array}\right.
\] 

\begin{Lemma}\label{shift}
Let $\tb\in \NN^n$ with $ \tb\geq \One.$ Then $r^\ast (S(-\ab))\cong S(-\sqrt{\ab})$ for every $\ab\in \NN^n$ with $\ab\leq \tb.$ 
\end{Lemma}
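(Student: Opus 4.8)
The plan is to compute both sides degree by degree and exhibit an explicit graded isomorphism, then check it is $S$-linear. First I would fix $\ab \in \NN^n$ with $\ab \le \tb$ and recall that the $\bb$-graded component of $r^\ast(S(-\ab))$ is by definition $(S(-\ab))_{\rr(\bb)} = S_{\rr(\bb)-\ab}$, which is one-dimensional over $K$ exactly when $\rr(\bb) \ge \ab$ and zero otherwise; a natural basis element is the monomial $\xb^{\rr(\bb)-\ab}$. Similarly $(S(-\sqrt{\ab}))_\bb = S_{\bb-\sqrt{\ab}}$ is one-dimensional exactly when $\bb \ge \sqrt{\ab}$, with basis $\xb^{\bb-\sqrt{\ab}}$. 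The key combinatorial observation is that, because $\ab \le \tb$, we have $\rr(\bb) \ge \ab$ if and only if $\bb \ge \sqrt{\ab}$: indeed $\rr(\bb)_i$ is either $t_i \ge a_i$ (when $b_i > 0$) or $0$ (when $b_i = 0$), so $\rr(\bb)_i \ge a_i$ forces $b_i > 0$ whenever $a_i > 0$, i.e. $\bb \ge \sqrt{\ab}$; conversely if $\bb \ge \sqrt{\ab}$ then for each $i$ with $a_i > 0$ we have $b_i > 0$, hence $\rr(\bb)_i = t_i \ge a_i$, while for $a_i = 0$ the inequality $\rr(\bb)_i \ge 0$ is automatic. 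Thus the two modules have the same support and matching one-dimensional components there.

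Next I would define the graded $K$-linear map $\varphi \colon S(-\sqrt{\ab}) \to r^\ast(S(-\ab))$ componentwise by sending, for each $\bb$ in the common support, the basis element $\xb^{\bb-\sqrt{\ab}}$ of $(S(-\sqrt{\ab}))_\bb$ to the basis element $\xb^{\rr(\bb)-\ab}$ of $(r^\ast(S(-\ab)))_\bb$, and extending by $K$-linearity; since both components are one-dimensional (or both zero) in every degree, $\varphi$ is a graded isomorphism of $K$-vector spaces. It remains to verify $S$-linearity, which is the only genuine point to check. Recall that in $r^\ast(S(-\ab))$ multiplication by $\xb^\cb$ sends a homogeneous element $y$ of degree $\bb$ to $\xb^{\rr(\cb+\bb)-\rr(\bb)} y$. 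So on one hand $\varphi(\xb^\cb \cdot \xb^{\bb-\sqrt{\ab}}) = \varphi(\xb^{\cb+\bb-\sqrt{\ab}}) = \xb^{\rr(\cb+\bb)-\ab}$, and on the other hand $\xb^\cb \cdot \varphi(\xb^{\bb-\sqrt{\ab}}) = \xb^\cb \cdot \xb^{\rr(\bb)-\ab} = \xb^{\rr(\cb+\bb)-\rr(\bb)} \cdot \xb^{\rr(\bb)-\ab} = \xb^{\rr(\cb+\bb)-\ab}$; these agree, and the same bookkeeping shows both sides vanish simultaneously when $\cb + \bb$ leaves the support. Hence $\varphi$ is an isomorphism of $S$-modules.

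I do not expect a serious obstacle here: the statement is essentially a direct computation. The one spot that requires care — and the reason the hypothesis $\ab \le \tb$ is present — is the equivalence $\rr(\bb) \ge \ab \iff \bb \ge \sqrt{\ab}$ used to match supports; without $\ab \le \tb$ the forward implication can fail (one could have $0 < a_i$ but $t_i < a_i$, so that $\rr(\bb)_i = t_i \not\ge a_i$ even when $b_i > 0$). Everything else is unwinding the definition of the $\rr^\ast$ action on the free module $S(-\ab)$.
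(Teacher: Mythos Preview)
Your proof is correct. The paper takes a different, much shorter route: it identifies $S(-\ab)$ with the principal monomial ideal $(\xb^{\ab})$, applies the already-proved Proposition~\ref{classicradical} (that $\rr^\ast I \cong \sqrt{I}$ for positively $\tb$-determined monomial ideals) to obtain $\rr^\ast((\xb^{\ab})) \cong (\xb^{\sqrt{\ab}})$, and then identifies $(\xb^{\sqrt{\ab}})$ back with $S(-\sqrt{\ab})$. Your argument instead unpacks the functor $\rr^\ast$ directly on $S(-\ab)$, matching graded components via the equivalence $\rr(\bb)\ge\ab \iff \bb\ge\sqrt{\ab}$ and then checking $S$-linearity by hand. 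The paper's approach is slicker because it recycles Proposition~\ref{classicradical}; your approach is more self-contained, does not depend on that proposition, and makes explicit exactly where the hypothesis $\ab\le\tb$ enters (namely in the forward direction of the support equivalence, to get $t_i\ge a_i$). Either way the content is the same elementary computation, just packaged differently.
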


\begin{proof}
We obviously have the following isomorphisms:
\[
\rr^\ast(S(-\ab))\cong \rr^\ast(\xb^\ab)\cong (\xb^{\sqrt{\ab}})\cong S(-\sqrt{\ab}).
\]
\end{proof}

In the sequel we will always assume that  $\rr^\ast M\neq 0$. Note that $\rr^\ast M=0$ if and only if $M_{\ab}=0$ for all $\ab\in \NN^n$ such that  
$a_i\in\{0,t_i\}$ for $1\leq i\leq n.$

The following theorem shows that the graded Betti numbers go down when passing from the module to its radical. In particular, we may derive inequalities for the corresponding depths.

\begin{Theorem}\label{bettiradical}
Let $\tb\in \NN^n,$ $\tb\geq \One$, and let $M$ be a positively $\tb$-determined $\NN^n$-graded $S$-module. Then
\[
\beta_{i,\ab}(M)\geq \beta_{i,\sqrt{\ab}}(\rr^\ast M) 
\] 
for all $i$ and $\ab.$ In particular, the following inequality holds:
\[
\depth  M\leq \depth \rr^\ast M.
\]
\end{Theorem}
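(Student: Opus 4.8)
The plan is to construct, for a minimal multigraded free resolution of $M$, an explicit complex of free modules over $\rr^\ast M$ by applying the functor $\rr^\ast$, and then to extract the Betti number inequality by comparing it with the minimal free resolution of $\rr^\ast M$. First I would take a minimal $\ZZ^n$-graded free resolution
\[
F_\bullet:\quad \cdots \longto F_i \longto \cdots \longto F_0 \longto M \longto 0,
\]
with $F_i=\bigoplus_{\ab}S(-\ab)^{\beta_{i,\ab}(M)}$. Since $M$ is positively $\tb$-determined, every shift $\ab$ occurring in $F_\bullet$ satisfies $\Zero\le\ab\le\tb$ (this is the standard fact that the multigraded Betti numbers of a positively $\tb$-determined module live in the box $[\Zero,\tb]$, which follows from the characterization via the upper Koszul complexes, or can be cited from \cite{Mill}). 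Now apply $\rr^\ast$. Because $\rr^\ast$ is exact (this is the result of \cite{BF} recalled above), the complex $\rr^\ast F_\bullet$ is a resolution of $\rr^\ast M$; and by Lemma~\ref{shift}, $\rr^\ast F_i\cong\bigoplus_{\ab}S(-\sqrt{\ab})^{\beta_{i,\ab}(M)}$, so $\rr^\ast F_\bullet$ is a (not necessarily minimal) $\ZZ^n$-graded free resolution of $\rr^\ast M$ whose free module in homological degree $i$, in multidegree $\bb\in\{0,1\}^n$, has rank $\sum_{\sqrt{\ab}=\bb}\beta_{i,\ab}(M)$.

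Next I would pass from this possibly non-minimal resolution to the minimal one. A standard fact is that the minimal free resolution is a direct summand of any free resolution; equivalently, $\beta_{i,\bb}(\rr^\ast M)=\dim_K\Tor_i^S(K,\rr^\ast M)_{\bb}$ is bounded above by the rank in multidegree $\bb$ of the $i$-th term of any free resolution. Hence
\[
\beta_{i,\bb}(\rr^\ast M)\ \le\ \sum_{\substack{\ab\in\NN^n\\ \sqrt{\ab}=\bb}}\beta_{i,\ab}(M)
\]
for every $i$ and every $\bb\in\{0,1\}^n$ (and $\beta_{i,\bb}(\rr^\ast M)=0$ for $\bb\notin\{0,1\}^n$ since $\rr^\ast M$ is squarefree by Proposition~\ref{sqfree}). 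Taking $\bb=\sqrt{\ab}$ and keeping only the summand corresponding to $\ab$ itself on the right gives the asserted inequality $\beta_{i,\ab}(M)\ge\beta_{i,\sqrt{\ab}}(\rr^\ast M)$; alternatively one states the sharper summed inequality and notes that the displayed one is an immediate consequence.

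For the depth statement I would invoke the Auslander–Buchsbaum formula: $\depth M=n-\pd M$ and $\depth\rr^\ast M=n-\pd\rr^\ast M$, where $\pd$ denotes projective dimension. Since $\beta_{i,\ab}(M)=0$ for all $\ab$ forces $\beta_{i,\sqrt{\ab}}(\rr^\ast M)=0$ for all $\ab$, and every $\bb\in\{0,1\}^n$ is of the form $\sqrt{\ab}$ (e.g. $\bb=\sqrt{\bb}$), we get $\beta_{i}(\rr^\ast M)=0$ whenever $\beta_i(M)=0$; therefore $\pd\rr^\ast M\le\pd M$, whence $\depth M\le\depth\rr^\ast M$. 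I expect the main obstacle to be the first step: justifying cleanly that all the multigraded shifts in the minimal resolution of $M$ lie in the box $[\Zero,\tb]$, so that Lemma~\ref{shift} actually applies to every $F_i$. This is where one must use the defining isomorphism property of positively $\tb$-determined modules (most transparently through Miller's description of $\Tor^S_i(K,M)$ via Koszul cohomology, which is only supported in degrees $\le\tb$); everything afterward is formal manipulation with exactness of $\rr^\ast$ and the minimality/summand argument for Betti numbers.
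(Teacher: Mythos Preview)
Your approach is essentially identical to the paper's: take a minimal $\ZZ^n$-graded free resolution of $M$, invoke Miller to see that all shifts lie in $[\Zero,\tb]$, apply the exact functor $\rr^\ast$ together with Lemma~\ref{shift} to get a (possibly non-minimal) free resolution of $\rr^\ast M$, and conclude via Auslander--Buchsbaum. The paper's proof does exactly this, with the same citations.

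One logical slip, though: from
\[
\beta_{i,\bb}(\rr^\ast M)\ \le\ \sum_{\sqrt{\ab}=\bb}\beta_{i,\ab}(M)
\]
you \emph{cannot} obtain the individual inequality $\beta_{i,\ab}(M)\ge\beta_{i,\sqrt{\ab}}(\rr^\ast M)$ by ``keeping only the summand corresponding to $\ab$ on the right''; dropping nonnegative terms from the larger side of an inequality yields nothing. Indeed the displayed individual inequality is false as literally stated (take $S=K[x]$, $\tb=(2)$, $M=S/(x^2)$, $\ab=(1)$: then $\beta_{1,(1)}(M)=0$ while $\beta_{1,(1)}(\rr^\ast M)=1$). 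The paper is terse at exactly this spot; what the argument genuinely establishes is the summed inequality above (equivalently the total Betti-number inequality $\beta_i(M)\ge\beta_i(\rr^\ast M)$), and that is all one needs for $\projdim\rr^\ast M\le\projdim M$ and the depth conclusion. So your ``alternatively one states the sharper summed inequality'' is the correct formulation, not an alternative.
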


\begin{proof}
Let
\[
\FF_{\bullet}: \hspace{0.8cm}  0\to \bigoplus_{\ab}S(-\ab)^{\beta_{p,\ab}}\to \cdots \to \bigoplus_{\ab}S(-\ab)^{\beta_{1,\ab}} \to 
\bigoplus_{\ab}S(-\ab)^{\beta_{0,\ab}}\to M\to 0
\]
be a minimal free resolution of $M$ over $S.$ Since $M$ is positively $\tb$-determined, by \cite[Proposition 2.5]{Mill}, it follows that all the 
shifts in the above resolution are $\leq \tb.$ We apply $\rr^\ast$ to $\FF_{\bullet}$. By the exactness of  $\rr^\ast,$ we get a free 
$S$-resolution of $\rr^\ast M,$ possibly non-minimal. Therefore, we get the inequalities between the graded Betti numbers of $M$ and, respectively, 
$\rr^\ast M.$ These inequalities imply that $\projdim_S M\geq \projdim_S( \rr^\ast M)$ and, by using Auslander-Buchsbaum formula, we get the inequalities between  depths.
\end{proof}

\begin{Remark}\label{resol}{\em
By the above proof it follows that if
\[
\FF_{\bullet}:  0\to \bigoplus_{\ab}S(-\ab)^{\beta_{p,\ab}}\to \cdots \to \bigoplus_{\ab}S(-\ab)^{\beta_{1,\ab}} \to 
\bigoplus_{\ab}S(-\ab)^{\beta_{0,\ab}}\to M\to 0
\]
is a minimal $\ZZ^n$-graded free resolution of $M,$ then 
\[
\rr^\ast\FF_{\bullet}:  0\to \bigoplus_{\ab}S(-\sqrt{\ab})^{\beta_{p,\ab}}\to \cdots \to \bigoplus_{\ab}S(-\sqrt{\ab})^{\beta_{1,\ab}} \to 
\bigoplus_{\ab}S(-\sqrt{\ab})^{\beta_{0,\ab}}\to \rr^\ast M\to 0
\] 
is a free resolution of $\rr^\ast M$. Moreover, if the map $\partial_i: \bigoplus_{\ab}S(-\ab)^{\beta_{i,\ab}} \to 
\bigoplus_{\ab}S(-\ab)^{\beta_{i-1,\ab}}$ is given by the matrix $(\xb^{\ab_j-\bb_k})_{j,k}$, then $\rr^\ast \partial_i: \bigoplus_{\ab}S(-\sqrt{\ab})^{\beta_{i,\ab}} \to \bigoplus_{\ab}S(-\sqrt{\ab})^{\beta_{i-1,\ab}}$ is given by $(\xb^{\sqrt{\ab_j}-\sqrt{\bb_k}})_{j,k}$.
}
\end{Remark}

In the following corollary  we derive some consequences of Theorem~\ref{bettiradical}. To begin with, let $I\subset S$ be a monomial ideal. Then $I$ is a positively 
$\tb$-determined $\NN^n$-graded $S$-module if we choose, for instance, $\tb=(t_1,\ldots,t_n)$ where $t_i=\max\{\nu_i(u)\ |\ u\in G(I)\}$. Here   $G(I)$ denotes the minimal system of monomial generators of the ideal $I.$ Proposition~\ref{classicradical} says that $\rr^\ast I$ is actually $\sqrt{I}.$ Therefore, from Theorem~\ref{bettiradical}, we obtain as a consequence the following corollary which  extends results of \cite{HTT}.

\begin{Corollary}\label{extensionHHT}
Let $I\subset J\subset S$ be monomial ideals with $\sqrt{I}\neq \sqrt{J}$. Then $$\beta^S_{i,\ab}(J/I)\geq \beta^S_{i,\sqrt{\ab}}(\sqrt{J}/\sqrt{I})$$ for all $i\geq 0$ and $\ab\in 
\NN^n.$
Consequently, $$\depth_S(\sqrt{J}/\sqrt{I})\geq \depth_S(J/I).$$
\end{Corollary}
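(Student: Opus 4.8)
The plan is to deduce the corollary from Theorem~\ref{bettiradical} applied to the module $M=J/I$, once we identify $\rr^\ast(J/I)$ with $\sqrt{J}/\sqrt{I}$. First I would fix a vector $\tb=(t_1,\dots,t_n)$ with $t_i=\max\{\nu_i(u)\ :\ u\in G(I)\cup G(J)\}$ (or any larger vector). With this choice both $I$ and $J$ are positively $\tb$-determined, and hence so is $J/I$; the functor $\rr^\ast$ is taken with respect to this $\tb$.

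Next I would apply $\rr^\ast$ to the short exact sequence $0\to I\to J\to J/I\to 0$. Since $\rr^\ast$ is exact (\cite{BF}), this yields an exact sequence $0\to \rr^\ast I\to \rr^\ast J\to \rr^\ast(J/I)\to 0$. By Proposition~\ref{classicradical} there are $S$-module isomorphisms $\rr^\ast I\cong \sqrt{I}$ and $\rr^\ast J\cong \sqrt{J}$, and I would check that they fit into a commutative square with the inclusion $I\hookrightarrow J$ on one side and $\sqrt{I}\hookrightarrow\sqrt{J}$ on the other: the isomorphism of Proposition~\ref{classicradical} is given in each degree $\ab$ by $\xb^\ab\mapsto \xb^{\rr(\ab)}$, a recipe that does not refer to the particular ideal, so the square commutes. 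Passing to cokernels then gives $\rr^\ast(J/I)\cong \sqrt{J}/\sqrt{I}$. The hypothesis $\sqrt{I}\neq\sqrt{J}$ ensures $\rr^\ast(J/I)\neq 0$, so the standing assumption of this section applies to $M=J/I$.

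Finally I would invoke Theorem~\ref{bettiradical} for $M=J/I$, obtaining $\beta^S_{i,\ab}(J/I)\geq \beta^S_{i,\sqrt{\ab}}(\rr^\ast(J/I))=\beta^S_{i,\sqrt{\ab}}(\sqrt{J}/\sqrt{I})$ for all $i\geq 0$ and $\ab\in\NN^n$, together with the ``in particular'' clause of that theorem, which gives $\depth_S(J/I)\leq \depth_S(\rr^\ast(J/I))=\depth_S(\sqrt{J}/\sqrt{I})$. I expect the only point needing care to be the compatibility of the isomorphisms of Proposition~\ref{classicradical} with the inclusion maps, i.e. that the cokernel of $\rr^\ast I\to\rr^\ast J$ is genuinely $\sqrt{J}/\sqrt{I}$ and not merely some module with the same Betti numbers; since the degreewise maps $\xb^\ab\mapsto \xb^{\rr(\ab)}$ are defined uniformly in the ideal, this is routine, and everything else is a direct application of the exactness and functoriality of $\rr^\ast$ and of Theorem~\ref{bettiradical}.
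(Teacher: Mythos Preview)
Your proposal is correct and follows essentially the same route as the paper: the paper states this corollary without a separate proof, relying on the discussion preceding it that $J/I$ is positively $\tb$-determined for a suitable $\tb$, that Proposition~\ref{classicradical} identifies $\rr^\ast$ of a monomial ideal with its radical, and that Theorem~\ref{bettiradical} then gives the inequalities. You supply more detail than the paper does---in particular, you make explicit the short exact sequence argument and the compatibility of the isomorphisms of Proposition~\ref{classicradical} with the inclusion $I\hookrightarrow J$ needed to conclude $\rr^\ast(J/I)\cong\sqrt{J}/\sqrt{I}$---but the underlying strategy is identical.
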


In the following we study the relationship between the Krull dimension of $M$ and $\rr^\ast M.$
We first introduce the following notation. For  $\ab\in \NN^n,$ $\supp(\ab)=\{i: a_i>0\}$ and $\supp^\tb(\ab)=\{i: a_i\geq t_i\}.$ We use the following convention. For $\ab, \bb \in \ZZ^n$,
let $\ab \cdot \bb$ denote the vector whose $i$-th component is
$a_ib_i$.

\begin{Proposition}\label{Ass}
Let  $M$ be a positively $\tb$-determined $S$-module where $\tb\in \NN^n, \tb\geq \One.$ Then $\Ass(\rr^\ast M)\subset \Ass(M).$
\end{Proposition}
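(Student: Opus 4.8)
The plan is to reduce to the combinatorial core and then exhibit, for each associated prime of $\rr^\ast M$, an explicit homogeneous element of $M$ realizing it.

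First, since $\rr^\ast M$ is a finitely generated $\ZZ^n$-graded $S$-module, every $P\in\Ass(\rr^\ast M)$ is $\ZZ^n$-graded, hence of the form $\pp_F=(x_i:i\in F)$ for some $F\subseteq[n]$; choose a homogeneous $y\in(\rr^\ast M)_\ab=M_{\rr(\ab)}$, $y\neq0$, with $\Ann_S(y)=\pp_F$ in $\rr^\ast M$, and put $G=\supp(\ab)$. The aim is to build $z\in M$ with $\sqrt{\Ann_S(z)}=\pp_F$, for then $\pp_F$ is the unique minimal prime over $\Ann_S(z)$, so $\pp_F\in\Ass(S/\Ann_S(z))$, and since $S/\Ann_S(z)\cong Sz\hookrightarrow M$ we obtain $\pp_F\in\Ass(M)$.

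The key translation step is this: from the definition of the $S$-module structure of $\rr^\ast M$, for any monomial $\xb^\db$ we have $\xb^\db\cdot y=\xb^{\rr(\ab+\db)-\rr(\ab)}\cdot y$, and $\rr(\ab+\db)-\rr(\ab)=\sum_{i\in\supp(\db)\setminus G}t_i\eb_i$, so $\xb^\db\cdot y$ is the element $\big(\prod_{i\in\supp(\db)\setminus G}x_i^{t_i}\big)y$ of $M$. Taking $\db=\eb_i$ with $i\in G$ gives $x_i\cdot y=y\neq0$, whence $G\cap F=\emptyset$; taking squarefree $\db$ shows that for every $E\subseteq[n]\setminus G$, $\big(\prod_{i\in E}x_i^{t_i}\big)y=0$ in $M$ if and only if $E\cap F\neq\emptyset$. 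Now set $H=[n]\setminus(F\cup G)$ and $z=\big(\prod_{i\in H}x_i^{t_i}\big)y$: since $H\cap F=\emptyset$ we get $z\neq0$, and the multidegree of $z$ is $t_i$ in each coordinate $i\notin F$ and $0$ in each coordinate $i\in F$. For $i\in F$ we have $x_i^{t_i}z=\big(\prod_{j\in H\cup\{i\}}x_j^{t_j}\big)y=0$, so $\pp_F\subseteq\sqrt{\Ann_S(z)}$; and if $\xb^\db$ is a monomial supported on $[n]\setminus F$, then because $M$ is positively $\tb$-determined and every coordinate of $\deg z$ outside $F$ is $\ge t_i$, multiplication by $\xb^\db$ out of degree $\deg z$ is injective, so $\xb^\db z\neq0$; hence the monomial ideal $\Ann_S(z)$ contains no monomial supported on $[n]\setminus F$ and therefore lies in $\pp_F$. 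The two inclusions give $\sqrt{\Ann_S(z)}=\pp_F$, completing the proof.

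I expect the only delicate point to be that one cannot take $z=y$ itself: for $i\in F$ the variable $x_i$ acts on $\rr^\ast M$ via $x_i^{t_i}$ on $M$, so the relation transported to $M$ is $x_i^{t_i}y=0$ rather than $x_iy=0$, and $\Ann_M(y)$ need not equal $\pp_F$. This forces the detour through radicals, which succeeds precisely because positive $\tb$-determinedness lets us bound $\Ann_S(z)$ from above by $\pp_F$.
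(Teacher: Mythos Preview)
Your proof is correct, and it shares the paper's core strategy: given $P\in\Ass(\rr^\ast M)$, exhibit an explicit homogeneous element of $M$ whose annihilator has $P$ as its unique associated prime. The two executions differ in their bookkeeping. The paper first invokes \cite[Lemma~2.2]{Y} to assume the witness already sits in the squarefree degree $\eb_{[n]\setminus F}$ (your notation), so that its image in $M$ already has degree $\deg z=\sum_{i\notin F}t_i\eb_i$; you instead start from an arbitrary degree and manufacture $z$ by hand via multiplication by $\prod_{i\in H}x_i^{t_i}$, which makes the argument self-contained. Then, where you show $\sqrt{\Ann_S(z)}=\pp_F$ and pass through the minimal-prime argument, the paper instead chooses a \emph{maximal} monomial $w\in K[x_i:i\in F]$ with $wu\neq 0$ and verifies $\Ann_S(wu)=\pp_F$ on the nose. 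The paper's route gives the slightly sharper equality of annihilators; yours avoids the external reference and the maximality step, at the cost of the extra (but standard) radical argument.
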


\begin{proof}
Let $F\subset [n]$ and $P=P_F:=(x_i : i\notin F)$ be an associated prime of $\rr^\ast(M)$. Then, by \cite[Lemma 2.2]{Y}, there exists $0\neq u\in
(\rr^\ast M)_{\eb_F}$ such that $x_i u=0$ for all $i\notin F,$ where $\eb_F:=\sum_{i\in F}\eb_i.$ This means that there exists $0\neq u\in M_{\tb\cdot \eb_F}$ such that 
\[
\xb^{\rr(\eb_{F\cup\{i\}})-\rr(\eb_i)} u=\xb^{\tb\cdot \eb_i}u=x_i^{t_i}u=0
\]
for all $i\notin F.$ Then we may choose a maximal monomial (with respect to divisibility) $w\in K[\{x_i : i\notin F\}]$ such that $wu\neq 0.$ 
We claim that $P_F=\ann(wu)$, which will end the proof. 

If $i\notin F,$ then $x_i(wu)=0$ by the choice of $w,$ hence $P_F\subset\ann(wu).$ Let now $v$ be a monomial in $\ann(wu)$, that is, $v(wu)=0$. 
Clearly, for every monomial $w^\prime\in K[\{x_i: i\in F\}]$, we have $w^\prime wu\neq 0$ since $\supp(w^\prime)\subset \supp^\tb(wu)$ and $M$ is positively $\tb$-determined. This implies that there exists $i\notin F$ such that $x_i$ divides $v$, thus $v\in P_F.$
\end{proof}

\begin{Corollary}\label{dim}
Let $M$ be a positively $\tb$-determined module. Then $\dim M\geq \dim \rr^\ast M.$
\end{Corollary}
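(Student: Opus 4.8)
The plan is to deduce Corollary~\ref{dim} directly from Proposition~\ref{Ass}, which gives the containment $\Ass(\rr^\ast M)\subset\Ass(M)$. Recall that for a finitely generated module $N$ over a Noetherian ring, $\dim N=\max\{\dim S/\pp : \pp\in\Ass(N)\}$, since every minimal prime of the support of $N$ is an associated prime. Applying this to both $M$ and $\rr^\ast M$ and using the inclusion of associated primes, the maximum defining $\dim\rr^\ast M$ is taken over a subset of the primes defining $\dim M$, so $\dim\rr^\ast M\leq\dim M$.

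Concretely, I would argue as follows. Let $\pp\in\Ass(\rr^\ast M)$ be a prime with $\dim S/\pp=\dim\rr^\ast M$; such a prime exists because $\dim\rr^\ast M$ equals the maximum of $\dim S/\qq$ over $\qq\in\Supp(\rr^\ast M)$, this maximum is attained at a minimal prime of the support, and minimal primes of the support lie in $\Ass(\rr^\ast M)$. By Proposition~\ref{Ass}, $\pp\in\Ass(M)\subset\Supp(M)$, hence $\dim M\geq\dim S/\pp=\dim\rr^\ast M$. (One should note the convention $\dim 0=-\infty$ or $-1$; under the standing assumption $\rr^\ast M\neq 0$ this degenerate case does not arise, and in any event the inequality holds trivially when $\rr^\ast M=0$.)

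This is essentially a one-line corollary, so there is no real obstacle: the only thing to be careful about is that both modules are finitely generated $\ZZ^n$-graded modules over the Noetherian ring $S$ — finite generation of $\rr^\ast M$ follows from the fact, recorded in the proof of Theorem~\ref{bettiradical}, that $\rr^\ast$ applied to a finite free resolution of $M$ yields a finite free resolution of $\rr^\ast M$, so $\rr^\ast M$ has a finite presentation. With finite generation in hand, the characterization of Krull dimension via associated primes applies verbatim, and the corollary follows.

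\begin{proof}
We may assume $\rr^\ast M\neq 0$, the other case being trivial. Since $\rr^\ast M$ is a finitely generated $S$-module, there is a prime $\pp\in\Ass(\rr^\ast M)$ with $\dim S/\pp=\dim\rr^\ast M$: indeed $\dim\rr^\ast M=\max\{\dim S/\qq : \qq\in\Supp(\rr^\ast M)\}$, this maximum is attained at some minimal prime of $\Supp(\rr^\ast M)$, and every minimal prime of the support of a module belongs to its set of associated primes. By Proposition~\ref{Ass}, $\pp\in\Ass(M)$, hence $\pp\in\Supp(M)$ and therefore $\dim M\geq\dim S/\pp=\dim\rr^\ast M$.
\end{proof}
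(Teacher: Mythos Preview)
Your proof is correct and is exactly the argument the paper intends: the corollary is stated immediately after Proposition~\ref{Ass} with no proof, so it is meant to follow at once from the inclusion $\Ass(\rr^\ast M)\subset\Ass(M)$ together with the standard fact that $\dim N=\max\{\dim S/\pp:\pp\in\Ass(N)\}$ for finitely generated modules. Your added care about finite generation of $\rr^\ast M$ and the degenerate case $\rr^\ast M=0$ is fine but not strictly necessary here.
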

Note that the inequality $\dim \rr^\ast M\leq \dim M$ may be strict as the following example shows. On the other hand, we have 
$\dim \rr^\ast M = \dim M$ if and only if there exists $\ab\in \NN^n$ such that $\#\supp^\tb(\ab)=\dim M$ and $M_{\rr(\ab)}\neq 0$.

\begin{Example}{\em 
Let $I=(a^4d^4,a^2b^3,b^3c^2,b^3d)$ and $J=(a^3d^3,a^3b,b^2)$, $I,J\subset K[a,b,c,d]$. One may easily check  that 
$\dim(\sqrt{J}/\sqrt{I})=1 < \dim(J/I)=2.$
}
\end{Example}

Let us recall that a finitely generated $S$-module is called {\em equidimensional} if all its minimal primes have the same codimension. As an immediate  consequence of Proposition~\ref{Ass} we get also the following 

\begin{Corollary}\label{equidim}
Let $M$ be a positively $\tb$-determined module such that $\dim M=\dim \rr^\ast M.$ If $M$ is equidimensional, then $\rr^\ast M$ is equidimensional, too.
\end{Corollary}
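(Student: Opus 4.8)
The plan is to deduce Corollary~\ref{equidim} directly from Proposition~\ref{Ass} together with the dimension hypothesis, without any further module-theoretic machinery. Recall that a finitely generated module $N$ is equidimensional precisely when every $P\in\Min(N)$ satisfies $\dim(S/P)=\dim N$; equivalently, since $\Min(N)\subset\Ass(N)$, when every $P\in\Ass(N)$ has $\dim(S/P)\geq \dim N$ (the reverse inequality being automatic for minimal primes, and the embedded primes being irrelevant to the ''equidimensional'' condition as it only constrains the \emph{minimal} ones).

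First I would fix $P\in\Min(\rr^\ast M)$ and aim to show $\dim(S/P)=\dim\rr^\ast M$. By Proposition~\ref{Ass} we have $\Ass(\rr^\ast M)\subset\Ass(M)$, and in particular $P\in\Ass(M)$, so $P\supseteq Q$ for some $Q\in\Min(M)$. Since $M$ is equidimensional, $\dim(S/Q)=\dim M$. Now invoke the hypothesis $\dim M=\dim\rr^\ast M$: this gives $\dim(S/Q)=\dim\rr^\ast M\geq \dim(S/P)$, because $P\in\Supp(\rr^\ast M)$. Combined with $P\supseteq Q$, which forces $\dim(S/P)\leq\dim(S/Q)$, and with the trivial bound $\dim(S/P)\le\dim\rr^\ast M$, we must have $\dim(S/P)=\dim(S/Q)=\dim\rr^\ast M$. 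In particular the containment $P\supseteq Q$ is an equality of dimensions, and $P$ achieves the full dimension of $\rr^\ast M$. As $P$ was an arbitrary minimal prime of $\rr^\ast M$, this shows $\rr^\ast M$ is equidimensional.

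I do not expect any genuine obstacle here; the only point requiring a little care is the bookkeeping of which inequalities among $\dim(S/P)$, $\dim(S/Q)$, $\dim M$ and $\dim\rr^\ast M$ are being used and in which direction, since everything is driven by sandwiching $\dim(S/P)$ between two quantities that the hypothesis forces to coincide. One should also note that $\rr^\ast M\neq 0$ is in force throughout this section (stated just before Theorem~\ref{bettiradical}), so $\dim\rr^\ast M$ is a well-defined nonnegative integer and $\Supp(\rr^\ast M)\neq\emptyset$, making the argument non-vacuous. Writing this out should take only a few lines.
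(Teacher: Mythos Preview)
Your chain of inequalities establishes only \emph{upper} bounds on $\dim(S/P)$: from $P\supseteq Q$ you get $\dim(S/P)\le\dim(S/Q)=\dim M$, and from $P\in\Supp(\rr^\ast M)$ you get $\dim(S/P)\le\dim\rr^\ast M$. The hypothesis $\dim M=\dim\rr^\ast M$ merely makes these two upper bounds coincide; nowhere do you produce a lower bound, so the asserted equality $\dim(S/P)=\dim\rr^\ast M$ is a non sequitur. The underlying difficulty is that Proposition~\ref{Ass} only yields $P\in\Ass(M)$, and nothing prevents $P$ from being an \emph{embedded} prime of $M$; equidimensionality of $M$ constrains only the minimal primes. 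The minimal prime $Q\subsetneq P$ of $M$ need not lie in $\Supp(\rr^\ast M)$ at all, so minimality of $P$ in $\Ass(\rr^\ast M)$ does not force $P=Q$.

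In fact the gap cannot be repaired, because the statement as written is false. Take $S=K[x_1,x_2,x_3,x_4]$, $\tb=(2,1,1,1)$, and
\[
M=\bigl(S/(x_1)\bigr)(-\eb_1)\ \oplus\ S/(x_4)\ \oplus\ S/(x_1,x_2).
\]
Then $M\in\Modtb$, $\Min(M)=\{(x_1),(x_4)\}$, so $M$ is equidimensional of dimension $3$. Since $\rr(\ab)_1\in\{0,2\}$ for every $\ab$, one has $\rr^\ast\bigl((S/(x_1))(-\eb_1)\bigr)=0$, whence $\rr^\ast M\cong S/(x_4)\oplus S/(x_1,x_2)$. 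Thus $\dim\rr^\ast M=3=\dim M$, yet $\Min(\rr^\ast M)=\{(x_4),(x_1,x_2)\}$ consists of primes of codimensions $1$ and $2$, so $\rr^\ast M$ is not equidimensional. The paper gives no argument beyond calling the corollary an immediate consequence of Proposition~\ref{Ass}; that deduction is valid if one reads ``equidimensional'' as ``all associated primes have the same codimension'', since then $\Ass(\rr^\ast M)\subset\Ass(M)$ together with $\dim\rr^\ast M=\dim M$ finishes the proof at once.
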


The following example shows that  the implication of the above corollary is no longer true if $\dim M > \dim \rr^\ast M$.

\begin{Example}{\em
Let $P,P_1,P_2\subset S,$ $P=(x_1), P_2=(x_1,x_2)$, $P_2=(x_1,x_3,x_4),$ and $M=(S/P)(-(1,0,\ldots,0))\oplus (S/P_1)\oplus (S/P_2)$.
Then $M$ is positively $\tb$-determined, where $\tb=(2,1,\ldots,1),$ and equidimensional. We have $\rr^\ast M=(S/P_1)\oplus (S/P_2)$, thus $\rr^\ast M$ is not equidimensional. But, of course, $\dim M > \dim \rr^\ast M.$
}
\end{Example}

In \cite{HTT} it is shown that if $S/I$ is (sequentially) Cohen-Macaulay, then $S/\sqrt{I}$ shares the same property. We are going to extend this 
result to any positively determined $\NN^n$-graded module. 

\begin{Theorem}\label{CM}
Let  $M$ be a positively $\tb$-determined $S$-module with $\rr^\ast M \neq 0$ where $\tb\in \NN^n, \tb\geq \One.$ 
\begin{itemize}
	\item [(i)] If $M$ is Cohen-Macaulay, then $\rr^\ast M$ is Cohen-Macaulay and $\dim \rr^\ast M=\dim M.$
	\item [(ii)] If $M$ is sequentially Cohen-Macaulay, then $\rr^\ast M$ is sequentially Cohen-Macaulay.
\end{itemize}
\end{Theorem}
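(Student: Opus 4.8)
\textbf{Proof plan for Theorem~\ref{CM}.}
The strategy in both parts is to combine the two inequalities already established: the depth inequality $\depth M \le \depth \rr^\ast M$ from Theorem~\ref{bettiradical}, and the dimension inequality $\dim \rr^\ast M \le \dim M$ from Corollary~\ref{dim}. For part (i), suppose $M$ is Cohen--Macaulay. Then
\[
\dim M = \depth M \le \depth \rr^\ast M \le \dim \rr^\ast M \le \dim M,
\]
so all inequalities are equalities; in particular $\depth \rr^\ast M = \dim \rr^\ast M$, i.e.\ $\rr^\ast M$ is Cohen--Macaulay, and $\dim \rr^\ast M = \dim M$. (One should note this argument needs $\rr^\ast M \ne 0$, which is assumed, so that $\depth$ and $\dim$ are finite and the chain makes sense.) This is the easy part and requires essentially no new work.

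For part (ii), I would use the characterization of sequentially Cohen--Macaulay modules via the dimension filtration. Recall that $M$ is sequentially Cohen--Macaulay if and only if there is a filtration $0 = M_0 \subset M_1 \subset \cdots \subset M_r = M$ by $\ZZ^n$-graded submodules such that each quotient $M_j/M_{j-1}$ is Cohen--Macaulay and $\dim(M_1/M_0) < \dim(M_2/M_1) < \cdots < \dim(M_r/M_{r-1})$. The plan is: first check that each submodule $M_j$ in such a filtration of a positively $\tb$-determined $M$ is again positively $\tb$-determined — this follows because a $\ZZ^n$-graded submodule of a positively $\tb$-determined module is positively $\tb$-determined (the isomorphisms $(M_j)_\ab \xrightarrow{x_i} (M_j)_{\ab+\eb_i}$ for $a_i \ge t_i$ are inherited from those of $M$, since both sides inject into the corresponding components of $M$ and surjectivity follows from $x_i$-multiplication on $M$ together with $\ZZ^n$-gradedness of $M_j$). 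Hence $\rr^\ast M_j$ is defined for all $j$.

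Next, apply the exact functor $\rr^\ast$ to the filtration. Exactness gives a chain $0 = \rr^\ast M_0 \subseteq \rr^\ast M_1 \subseteq \cdots \subseteq \rr^\ast M_r = \rr^\ast M$ of squarefree submodules with $\rr^\ast M_j / \rr^\ast M_{j-1} \cong \rr^\ast(M_j/M_{j-1})$. By part (i) applied to each $M_j/M_{j-1}$ (whenever $\rr^\ast(M_j/M_{j-1}) \ne 0$), the quotient $\rr^\ast(M_j/M_{j-1})$ is Cohen--Macaulay of dimension equal to $\dim(M_j/M_{j-1})$; the terms with $\rr^\ast(M_j/M_{j-1}) = 0$ should simply be deleted from the chain. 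After deleting the zero quotients and relabeling, one obtains a filtration of $\rr^\ast M$ with Cohen--Macaulay quotients whose dimensions are a subsequence of $\dim(M_1/M_0) < \cdots < \dim(M_r/M_{r-1})$, hence still strictly increasing. Therefore $\rr^\ast M$ is sequentially Cohen--Macaulay.

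The main obstacle I anticipate is twofold: (a) making sure that after discarding vanishing quotients the remaining filtration is genuinely a filtration of $\rr^\ast M$ (i.e.\ that the bottom term is $0$ and the top term is all of $\rr^\ast M$ — this is automatic since $\rr^\ast$ preserves $0$ and $\rr^\ast M_r = \rr^\ast M$), and (b) justifying that a $\ZZ^n$-graded submodule of a positively $\tb$-determined module is again positively $\tb$-determined, so that part (i) genuinely applies to the graded quotients $M_j/M_{j-1}$. Both of these are routine once stated carefully, but they are the points where the argument could silently break if one is not careful; (b) in particular relies on the dimension filtration of a $\ZZ^n$-graded module being itself $\ZZ^n$-graded, which holds because the modules $\Gamma_{\dim \le d}(M)$ of elements with support of dimension $\le d$ are $\ZZ^n$-graded submodules.
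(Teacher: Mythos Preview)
Your approach is essentially the same as the paper's: chain the inequalities $\depth M\le\depth\rr^\ast M\le\dim\rr^\ast M\le\dim M$ for part (i), then push the CM-filtration through the exact functor $\rr^\ast$ and delete the zero quotients for part (ii). The paper does exactly this and does not spell out your obstacle (b) at all, so you are being more careful than the authors here.

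However, your justification of (b) contains a false general claim. It is \emph{not} true that an arbitrary $\ZZ^n$-graded submodule of a positively $\tb$-determined module is again positively $\tb$-determined: take $S=K[x]$, $\tb=\One$, and $N=(x^2)\subset S$; then $S$ is squarefree but multiplication $x:N_1\to N_2$ is the map $0\to K$, which is not surjective. So ``surjectivity follows from $x_i$-multiplication on $M$ together with $\ZZ^n$-gradedness of $M_j$'' is simply wrong. What saves the argument is the specific nature of the dimension filtration: if $M$ is positively $\tb$-determined, $m\in M_\ab$ with $a_i\ge t_i$, and $f\in S$ is a monomial with $f\cdot(x_im)=0$, then $x_i(fm)=0$ in a degree whose $i$-th coordinate is still $\ge t_i$, so injectivity of $x_i$ forces $fm=0$. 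Hence $\Ann(m)=\Ann(x_im)$ and $\dim(Sm)=\dim(Sx_im)$, so $m\in\Gamma_{\dim\le d}(M)$ iff $x_im\in\Gamma_{\dim\le d}(M)$. This shows each $M_j$ (and then each quotient $M_j/M_{j-1}$, since $\Modtb$ is closed under kernels and cokernels of maps between its objects) is positively $\tb$-determined, and part (i) applies. With this correction your proof goes through.
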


\begin{proof} (i). By Theorem~\ref{bettiradical} and Corollary~\ref{dim}, we get the following inequalities:
\[
\depth M\leq \depth \rr^\ast M\leq \dim \rr^\ast M\leq \dim M.
\] Since $M$ is Cohen-Macaulay, we get the desired conclusions.

(ii). As $M$ is sequentially Cohen-Macaulay, there exists a finite filtration 
\[
0=M_0\subset M_1\subset \cdots \subset M_r=M
\]  by graded submodules of $M$ such that each quotient $M_i/M_{i-1}$ is Cohen-Macaulay and 
\[
\dim M_1/M_0 < \dim M_2/M_1 <\cdots <\dim M_r/M_{r-1}.
\] This filtration induces the following filtration of $\rr^\ast M,$
\[
0=\rr^\ast M_0\subset \rr^\ast M_1\subset \cdots \subset \rr^\ast M_r=\rr^\ast M.
\]
By (i), each factor in this filtration is either $0$ or a Cohen-Macaulay module with  $\dim \rr^\ast M_i/\rr^\ast M_{i-1}=\dim  M_i/M_{i-1}.$  By skipping the redundant factors in the above filtration we get the desired filtration for $\rr^\ast M.$ Therefore, (ii) follows.
\end{proof}

We may now derive the following corollary which  extends some results of \cite{HTT}.

\begin{Corollary} \label{CMextend}
Let $I\subset J\subset S$ be monomial ideals such that $\sqrt I\neq \sqrt J$. Then:
	\item [(i)] If $J/I$ is Cohen-Macaulay, then $\sqrt J/\sqrt I $ is Cohen-Macaulay and, moreover, 
	 $$\dim J/I=\dim \sqrt J/\sqrt I.$$
	\item [(ii)] If $J/I$ is sequentially Cohen-Macaulay, then $\sqrt J/\sqrt I$ is sequentially Cohen-Macau\-lay.
\end{Corollary}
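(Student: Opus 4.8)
The plan is to reduce Corollary~\ref{CMextend} to Theorem~\ref{CM} by exhibiting, for monomial ideals $I\subset J\subset S$, an explicit choice of $\tb\in\NN^n$ for which $J/I$ is positively $\tb$-determined and for which $\rr^\ast(J/I)$ is identified with $\sqrt J/\sqrt I$. First I would recall from the preliminaries in Section~\ref{radicalsection} that $J/I$ is positively $\tb$-determined for $\tb\gg\One$; concretely, one may take $t_i=\max\{\nu_i(u)\mid u\in G(I)\cup G(J)\}$ (and replace any such $t_i$ by $1$ if the max is $0$, to ensure $\tb\geq\One$). With this choice both $I$ and $J$ are generated in degrees $\leq\tb$, so by Proposition~\ref{classicradical} we have $\rr^\ast I\cong\sqrt I$ and $\rr^\ast J\cong\sqrt J$ as $S$-modules, with the isomorphisms both induced degreewise by $\xb^\ab\mapsto\xb^{\rr(\ab)}$.

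Next I would apply the functor $\rr^\ast$ to the short exact sequence $0\to I\to J\to J/I\to 0$. Since $\rr^\ast$ is exact (as recalled from \cite{BF}), we obtain an exact sequence $0\to\rr^\ast I\to\rr^\ast J\to\rr^\ast(J/I)\to 0$, and the two compatible isomorphisms $\rr^\ast I\cong\sqrt I$, $\rr^\ast J\cong\sqrt J$ (which are restrictions of the same map $\xb^\ab\mapsto\xb^{\rr(\ab)}$ defined on all of $S$, hence commute with the inclusion $I\hookrightarrow J$) yield $\rr^\ast(J/I)\cong\sqrt J/\sqrt I$ as $S$-modules. Here one should check that $\rr^\ast I\hookrightarrow\rr^\ast J$ really is, under these identifications, the natural inclusion $\sqrt I\subset\sqrt J$; this follows because the identification maps are the degreewise restrictions of the single map $f_\ab(\xb^\ab)=\xb^{\rr(\ab)}$, which is defined independently of the ambient ideal. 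The hypothesis $\sqrt I\neq\sqrt J$ guarantees $\sqrt J/\sqrt I\neq 0$, i.e.\ $\rr^\ast(J/I)\neq 0$, so Theorem~\ref{CM} is applicable.

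Finally I would invoke Theorem~\ref{CM} with $M=J/I$: part (i) gives that $J/I$ Cohen--Macaulay implies $\rr^\ast(J/I)\cong\sqrt J/\sqrt I$ is Cohen--Macaulay with $\dim\sqrt J/\sqrt I=\dim J/I$, which is statement (i); part (ii) gives that $J/I$ sequentially Cohen--Macaulay implies $\rr^\ast(J/I)\cong\sqrt J/\sqrt I$ is sequentially Cohen--Macaulay, which is statement (ii).

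The only genuinely delicate point is the compatibility of the isomorphisms $\rr^\ast I\cong\sqrt I$ and $\rr^\ast J\cong\sqrt J$ with the inclusions, so that passing $\rr^\ast$ through the quotient sequence actually produces $\sqrt J/\sqrt I$ and not merely some module with the same Hilbert function. Everything else is a direct application of results already proved. I expect this to cause no real trouble precisely because the degreewise isomorphism in Proposition~\ref{classicradical} is given by a formula, $\xb^\ab\mapsto\xb^{\rr(\ab)}$, that does not reference the ideal, so it is visibly natural in the ideal; once that is observed the corollary is immediate.
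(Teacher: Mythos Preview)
Your proposal is correct and follows precisely the approach the paper intends: the corollary is stated without proof in the paper as an immediate consequence of Theorem~\ref{CM}, and your argument spells out exactly the missing steps (choosing $\tb$, identifying $\rr^\ast(J/I)\cong\sqrt J/\sqrt I$ via exactness of $\rr^\ast$ and Proposition~\ref{classicradical}, and checking $\rr^\ast(J/I)\neq 0$). Your attention to the naturality of the isomorphism in Proposition~\ref{classicradical} is well placed and the verification is sound.
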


 \section{The functor $\rr^\ast$  and $\Ext$}
 \label{extsection}

For $M \in \ModZ$ and $\ab \in \ZZ^n$,
$M(\ab)$ denotes the $\ZZ^n$-graded $S$-module such that $M = M(\ab)$
as underlying $S$-modules and the degree is given by the formula
$M(\ab)_\bb = M_{\ab + \bb}$. 
Following the usual convention, for $M, N \in \ModZ$,
we set $\grHom_S(M,N) := \bigoplus_{\ab\in \ZZ^n}\Hom_{\ModZ}(M,N(\ab))$.
Note that if $M$ is finitely generated, $\grHom_S (M,N) = \Hom_S(M,N)$.
Let $\grExt^i_S(-,N)$ (resp. $\grExt^i_S(M,-)$) be the $i$-th right derived functor of $\grHom_S(-,N)$ (resp. $\grHom_S(M, -)$).

In this section, we will study the relation between  $\rr^\ast$ and $\grExt$-functor. For this purpose, we need the following three functors.

Recall that degree-shifting induces an endofunctor of $\ModZ$.
For $\ab \in \ZZ^n$, let $\sigma_\ab$ denote the functor given by shifting degree by $\ab$. Thus we have $\sigma_\ab(M) = M(-\ab)$ for all $M \in \ModZ$.

For $M \in \ModZ$ and $\ab \in \ZZ^n$,
the truncated module $\tau_{ \ab}(M) := \bigoplus_{\bb \ge \ab} M_\bb$ is again an object of $\ModZ$, and any morphism $M \to N$ in $\ModZ$ induces the one $f|_{\tau_{ \ab}(M)}:\tau_{ \ab}(M) \to \tau_{ \ab}(N)$.
Thus we have the functor $\tau_{ \ab}: \ModZ \to \ModZ$.

Let $\ss : \NN^n \to \ZZ^n$ be the function defined by $\ss(\ab) = (s_i(a_i))_{1 \le i \le n}$ where
$$
s_i(a_i) = \begin{cases}
t_i &\text{if $a_i \ge 1$,}\\
t_i - 1  &\text{otherwise.}
\end{cases}
$$
The induced functor $\ss^\ast: \ModZ \to \ModN$, if restricted to $\Modtb$, is an endofunctor of $\Modtb$.

For $M \in \Modtb$ and $\ab \in \ZZ^n$ with $\ab \ge \Zero$,
the multiplication
$$
x^{\ab \cdot \tb - \rr(\ab)}: M_{\rr(\ab)} \to M_{\ab \cdot \tb}
$$
is a $K$-linear isomorphism since $\supp(\ab\cdot\tb-\rr(\ab))\subset \supp^\tb\rr(\ab)$. Let $\phi^M_\ab$ denote this map.

Now we are ready to define the two natural transformations $\Phi^\rr: \id_{\Modtb} \Longrightarrow \rr^\ast$ between endofunctors of $\Modtb$ and
$\Psi: \ss^\ast \Longrightarrow \sigma_{\One - \tb}$ between those of $\ModN$.
For $M \in \Modtb$, let $\Phi_M : M \to \rr^\ast(M)$ be the
map defined as follows; for a homogeneous $u \in M_\ab$ with $\ab \ge \Zero$,
$$
\Phi_M(u) := {(\phi^M_{\ab})}^{-1}(x^{\ab \cdot (\tb - \One)}u) \in \rr^\ast(M)_\ab.
$$

For $\ab \in \NN^n$, it is easy to verify that $\ab + \tb - \One \ge \ss(\ab)$.
For $M \in \ModN$, we define the map $\Psi_M: \ss^\ast(M) \to \sigma_{\One - \tb}(M)$ as follows; for $\ab \in \NN^n$ and a homogeneous $u \in \ss^\ast(M)_\ab$ with $\ab \ge \Zero$,
$$
\Psi_M(u) = x^{\ab + \tb - \One - \ss(\ab)} \cdot u \in \sigma_{\One - \tb}(M)_\ab.
$$

\begin{Lemma}\label{lem:nat}
The following statements hold.
\begin{enumerate}
\item The above $\Phi$ is indeed a natural transformation from $\id_{\Modtb}$ to $\rr^\ast$.
\item Let $\iota: \Modone \to \Modtb$ be the canonical embedding.
Then $\Phi$ induces the natural isomorphism $\id_{\Modone} \Longrightarrow \rr^\ast \circ \iota$ between endofunctors of $\Modone$.
\item The above $\Psi$ is indeed a natural transformation from $\ss^\ast$ to $\sigma_{\One - \tb}$.
\end{enumerate}
\end{Lemma}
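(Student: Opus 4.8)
The plan is to verify each of the three assertions by a direct degree-by-degree computation, since all the maps in sight are defined componentwise. The main point in every case is to check that the relevant squares of $K$-linear maps commute in each multidegree $\ab \in \ZZ^n$; all the naturality and functoriality reduce to bookkeeping with the exponent vectors $\rr(\ab)$, $\ss(\ab)$, and $\sqrt{\ab}$.

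For (1), I would first check that $\Phi_M$ is $S$-linear. Given a homogeneous $u \in M_\ab$ with $\ab \ge \Zero$ and a monomial $\xb^\cb$, I must compare $\Phi_M(\xb^\cb u)$, which lives in $(\rr^\ast M)_{\ab+\cb}$ and equals $(\phi^M_{\ab+\cb})^{-1}\bigl(\xb^{(\ab+\cb)\cdot(\tb-\One)} u\bigr)$, with $\xb^\cb \cdot \Phi_M(u)$, which by the definition of the $S$-structure on $\rr^\ast M$ equals $\xb^{\rr(\ab+\cb)-\rr(\ab)}\,(\phi^M_\ab)^{-1}\bigl(\xb^{\ab\cdot(\tb-\One)}u\bigr)$, again inside $M_{\rr(\ab+\cb)}$. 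Applying the isomorphism $\phi^M_{\ab+\cb}$, i.e.\ multiplying by $\xb^{(\ab+\cb)\cdot\tb-\rr(\ab+\cb)}$, to both sides reduces the identity to the monomial equality $(\ab+\cb)\cdot(\tb-\One) + (\ab+\cb)\cdot\tb - \rr(\ab+\cb) = \cb + \bigl(\rr(\ab+\cb)-\rr(\ab)\bigr) + \ab\cdot(\tb-\One) + \ab\cdot\tb - \rr(\ab)$ in each coordinate, after one also checks all exponents are nonnegative (they are, because $\supp(\ab\cdot\tb-\rr(\ab)) \subset \supp^{\tb}\rr(\ab)$ and $M$ is positively $\tb$-determined, so every multiplication invoked is an isomorphism onto a well-defined component). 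This coordinatewise check is elementary: for $i \notin \supp(\ab+\cb)$ both sides are $0$, and for $i \in \supp(\ab+\cb)$ one uses $\rr(\ab+\cb)_i = t_i$ and cancels. Then, for naturality, given $f: M \to N$ in $\Modtb$ one checks $\rr^\ast(f)\circ \Phi_M = \Phi_N \circ f$ in degree $\ab$: the left side is $f_{\rr(\ab)}\circ(\phi^M_\ab)^{-1}\circ(\cdot\xb^{\ab\cdot(\tb-\One)})$ and the right side is $(\phi^N_\ab)^{-1}\circ(\cdot\xb^{\ab\cdot(\tb-\One)})\circ f_\ab$; these agree because $f$ commutes with all multiplications, i.e.\ $\phi^N_\ab \circ f_{\rr(\ab)} = f_{\ab\cdot\tb}\circ\phi^M_\ab$, so the two $(\phi_\ab)^{-1}$'s can be interchanged with $f$.

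For (2), I would observe that when $M$ is squarefree (positively $\One$-determined), for $\ab$ with $\Zero \le \ab$ and $\ab$ squarefree (i.e.\ $\ab = \sqrt{\ab}$) we have $\rr(\ab) = \tb\cdot\sqrt{\ab}$ and $\ab\cdot\tb = \tb\cdot\sqrt{\ab}$ as well when $\ab$ is already squarefree but in general one must be a little careful: for a squarefree module the nonzero components sit in squarefree degrees, but $\Phi_M$ must still be checked on all of $\ModZ$. The cleanest route is to note that $(\rr^\ast\circ\iota)(M)_\ab = M_{\rr(\ab)} = M_{\tb\cdot\sqrt\ab} \cong M_{\sqrt\ab}$ via the multiplication maps (isomorphisms since $M$ is $\One$-determined, so $x_i$ acts as an iso from degree $\sqrt\ab$ up when the $i$-th coordinate is positive), and that $\Phi_M$ in degree $\ab$ is exactly this composite of isomorphisms; hence $\Phi_M$ is bijective in every degree, so an isomorphism of $S$-modules, and it is natural by part (1). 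I'd spell out that $\phi^M_\ab: M_{\rr(\ab)} \to M_{\ab\cdot\tb}$ together with the comparison $M_{\ab\cdot(\tb-\One)+\cdots}$ collapses, since for $\One$-determined $M$ the relevant $x^{\ab\cdot(\tb-\One)}$ multiplication is itself an isomorphism.

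For (3), the argument is the same style but easier, since $\sigma_{\One-\tb}$ and $\ss^\ast$ involve no inverses. I must check $\Psi_M$ is $S$-linear and natural. For $S$-linearity: for $u \in \ss^\ast(M)_\ab = M_{\ss(\ab)}$ and a monomial $\xb^\cb$, one side is $\Psi_M(\xb^\cb u) = \xb^{(\ab+\cb)+\tb-\One-\ss(\ab+\cb)}\cdot\bigl(\xb^{\ss(\ab+\cb)-\ss(\ab)}u\bigr)$ in $\sigma_{\One-\tb}(M)_{\ab+\cb} = M_{\ab+\cb+\One-\tb}$, using that the $S$-structure on $\ss^\ast M$ multiplies by $\xb^{\ss(\ab+\cb)-\ss(\ab)}$; the other side is $\xb^\cb\cdot\Psi_M(u) = \xb^\cb\cdot\xb^{\ab+\tb-\One-\ss(\ab)}u$. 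Both reduce to $\xb^{\ab+\cb+\tb-\One-\ss(\ab)}u$, so they agree once one verifies all exponents are $\ge\Zero$, which follows from $\ab+\tb-\One\ge\ss(\ab)$ (already noted in the text, proved coordinatewise: if $a_i\ge1$ then $a_i+t_i-1\ge t_i$, and if $a_i=0$ then $t_i-1\ge t_i-1$) together with the analogous inequality for $\ab+\cb$, and $\ss(\ab+\cb)\ge\ss(\ab)$. Naturality against $f:M\to N$ in $\ModN$ is immediate: in degree $\ab$, $\sigma_{\One-\tb}(f)\circ\Psi_M = f_{\ab+\One-\tb}\circ(\cdot\xb^{\ab+\tb-\One-\ss(\ab)}) = (\cdot\xb^{\ab+\tb-\One-\ss(\ab)})\circ f_{\ss(\ab)} = \Psi_N\circ\ss^\ast(f)$, because multiplication maps commute with graded morphisms.

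The only place where anything could go wrong is the coordinatewise nonnegativity and positivity bookkeeping — one must consistently split into the cases $i \in \supp(\ab)$ versus $i \notin \supp(\ab)$ (and similarly for $\ab+\cb$) and invoke that $M$ is positively $\tb$-determined precisely to know the relevant multiplication maps are isomorphisms, not merely defined. I expect this case analysis, rather than any conceptual difficulty, to be the bulk of the work; everything else is formal.
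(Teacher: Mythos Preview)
Your approach is essentially identical to the paper's: for (1) you verify $S$-linearity of $\Phi_M$ by applying $\phi^M_{\ab+\cb}$ to both sides and reducing to a monomial identity, then check the naturality square using that $f$ commutes with multiplications; for (2) you use that the multiplication $\xb^{\ab\cdot(\tb-\One)}$ is already an isomorphism on a squarefree module (the paper restricts to $\Zero\le\ab\le\One$, which is equivalent since both modules are $\One$-determined); and for (3) the paper simply says ``similar to (1)'', so your explicit computation there is more than adequate.

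One small slip to fix in (1): you wrote $\Phi_M(\xb^\cb u) = (\phi^M_{\ab+\cb})^{-1}\bigl(\xb^{(\ab+\cb)\cdot(\tb-\One)} u\bigr)$, but the argument of $(\phi^M_{\ab+\cb})^{-1}$ should be $\xb^{(\ab+\cb)\cdot(\tb-\One)}\cdot(\xb^\cb u)$, since $\Phi_M$ is applied to the degree-$(\ab+\cb)$ element $\xb^\cb u$; your displayed exponent equality is correspondingly garbled (applying $\phi^M_{\ab+\cb}$ to $(\phi^M_{\ab+\cb})^{-1}(v)$ just returns $v$, so no extra $(\ab+\cb)\cdot\tb-\rr(\ab+\cb)$ term appears on the left). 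Once this is corrected, both sides of the identity in $M_{(\ab+\cb)\cdot\tb}$ become $\xb^{(\ab+\cb)\cdot(\tb-\One)+\cb}u = \xb^{\cb\cdot\tb+\ab\cdot(\tb-\One)}u$, and the coordinatewise check goes through exactly as you describe.
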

\begin{proof}
We will prove only the assertions (1) and (2). The rest is proved by the way similar to (1).

(1) First, we must verify that $\Phi_M$ is an $S$-linear map.
Take any $u \in M_\ab$ and $\bb \in \ZZ^n$ with $\bb \ge \Zero$.
Then
\begin{align*}
\phi^M_{\ab + \bb} (x^\bb \cdot \Phi_M(u))
  & = x^{(\ab + \bb ) \cdot \tb - \rr(\ab + \bb)} (x^{\rr(\ab + \bb) - \rr(\ab)} {(\phi^M_{\ab})}^{-1}(x^{\ab \cdot (\tb - \One)}u)) \\
  & = x^{(\ab + \bb) \cdot \tb - \rr(\ab)} {(\phi^M_{\ab})}^{-1}(x^{\ab \cdot (\tb - \One)}u) \\
  & = x^{\bb \cdot \tb} (x^{\ab \cdot \tb - \rr(\ab)} {(\phi^M_{\ab})}^{-1}(x^{\ab \cdot (\tb - \One)}u))
    = x^{\bb \cdot \tb} \cdot x^{\ab \cdot (\tb -\One)}u\\
  &  = x^{(\ab + \bb)\cdot (\tb - \One)} (x^\bb u),
\end{align*}
and hence it follows that
$$
x^\bb \cdot \Phi_M(u) = (\phi^M_{\ab + \bb})^{-1}(x^{(\ab + \bb) \cdot (\tb - \One)} (x^\bb u)) = \Phi_M(x^\bb u).
$$
Thus $\Phi_M$ is indeed $S$-linear.

Next let $f: M \to N$ be a morphism in $\Modtb$. We will show that the following diagram
commutes;
$$
\begin{CD}
M @>\Phi_M >> \rr^\ast(M) \\
@VfVV  @VV\rr^\ast(f)V \\
N @>>\Phi_N> \rr^\ast(N).
\end{CD}
$$
Let $u \in M_\ab$. Then
\begin{align*}
\phi^N_\ab(\rr^\ast(f) \circ \Phi_M(u)) &= x^{\ab \cdot \tb - \rr(\ab)} \cdot f (\Phi_M(u)) \\
  &= f(x^{\ab \cdot \tb - \rr(\ab)} \cdot \Phi_M(u) ) \\
  &= f(x^{\ab \cdot \tb - \rr(\ab)} \cdot (\phi^M_\ab)^{-1}(x^{\ab \cdot (\tb - \One)} u)) \\
  &= f(x^{\ab \cdot (\tb - \One)} u) = x^{\ab \cdot(\tb - \One)} \cdot f(u)
\end{align*}
Therefore we conclude that
$$
\rr^\ast(f) \circ \Phi_M(u) = (\phi^N_\ab)^{-1}(x^{\ab \cdot (\tb - \One)} \cdot f(u))
= \Phi_N \circ f (u).
$$

(2) Let $M \in \Modone$. We have to show $\Phi_M: M \to \rr^\ast(M)$ is then an isomorphism. Since both of $M$ and $\rr^\ast(M)$ are objects in $\Modone$,
it suffices to show that each $(\Phi_M)_\ab: M_\ab \to \rr^\ast(M)_\ab$ is an isomorphism for all $\ab$ with $\Zero \le \ab \le \One$.
This is an immediate consequence of the fact that for such $\ab$, the multiplication map
$$
M_\ab \xrightarrow{x^{\ab \cdot (\tb - \One)}} M_{\ab \cdot \tb}
$$
is an isomorphism since $M \in \Modone$.
Thus we conclude that $\Phi_M$ is an isomorphism for all $M \in \Modone$.
\end{proof}

%
%
%
%
%
%
%
%
%
 Let $\Dc$ denote the contravariant functor $\grHom_S(-, S): \ModZ \to \ModZ$.
We set $\Dc_{\tb} := \sigma_{\tb } \circ \Dc$. Note that $\Dc_{\tb}$ gives a duality
on $\Modtb$, and $\Dc_{\One} = \sigma_{-\tb + \One} \circ \Dc_{\tb}$
is the usual duality on $\ModZ$ by the canonical module $S(-\One)$ of $S$.
The functor $\Dc_{\tb}$, lifted up to a functor from the category of complexes  in $\Modtb$ to itself, coincides with the one $\Dc_\tb$ in \cite{BF} up to shifting and quasi-isomorphism \cite[Proposition 3.6]{BF}.
Moreover $\Dc_\tb$ (resp. $\Dc_\One$) sends $M \in \Modtb$ (resp. $M \in \Modone$) to an object in $\Modtb$ (resp. $\Modone$).

\begin{Proposition}\label{prop:r_and_D}
There exists the natural isomorphisms between functors
\begin{enumerate}
\item $\tau_{\Zero} \circ \Dc_\One \simeq \Dc_\One \circ \rr^\ast$ and
\item $\rr^\ast \circ \Dc_\tb \simeq \Dc_\One \circ \ss^\ast$,
\end{enumerate}
from $\Modtb$ to $\Modone$.
\end{Proposition}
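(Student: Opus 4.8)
I will establish the two natural isomorphisms of Proposition~\ref{prop:r_and_D} degree by degree, producing in each case an explicit homogeneous $K$-linear isomorphism that is simultaneously $S$-linear and natural in $M$. The key observation throughout is that both $\Modtb$ and $\Modone$ are categories of multigraded modules supported in a fixed box, so a morphism is determined by its components in finitely many degrees, and checking $S$-linearity reduces to checking compatibility with the finitely many multiplications $x_i$. I will make repeated use of Lemma~\ref{shift} and Remark~\ref{resol}, which tell us exactly what $\rr^\ast$ does to free modules and to the matrices of a resolution.

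\textbf{Part (1): $\tau_{\Zero}\circ\Dc_\One\simeq\Dc_\One\circ\rr^\ast$.} Let $M\in\Modtb$ and take a minimal $\ZZ^n$-graded free resolution $\FF_\bullet$ of $M$ as in Remark~\ref{resol}, with all shifts $\ab\le\tb$. On the one hand, $\Dc_\One\circ\rr^\ast$ applied to $M$ is computed by applying $\grHom_S(-,S(-\One))$ to the resolution $\rr^\ast\FF_\bullet$ of $\rr^\ast M$ (using that $\rr^\ast\FF_\bullet$ is a free, though possibly non-minimal, resolution by Remark~\ref{resol}), and by Lemma~\ref{shift} its terms are $\bigoplus_\ab S(\sqrt{\ab_j}-\One)^{\beta_{i,\ab}}$ with transposed matrices $(\xb^{\sqrt{\bb_k}-\sqrt{\ab_j}})$. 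On the other hand, $\tau_{\Zero}\circ\Dc_\One$ applied to $M$: the complex $\Dc_\One(\FF_\bullet)$ has terms $\bigoplus_\ab S(\ab-\One)^{\beta_{i,\ab}}$ with transposed matrices $(\xb^{\bb_k-\ab_j})$, and $\tau_{\Zero}$ simply discards the components in negative degrees. I claim the comparison is: for a free module $S(-\cb)$ with $\cb\le\tb$, one has a natural degree-preserving identification $\tau_{\Zero}\bigl(\Dc_\One(S(-\cb))\bigr)=\tau_{\Zero}(S(\cb-\One))\cong S(\sqrt{\cb}-\One)$ — indeed $S(\cb-\One)_\ab=S_{\ab+\cb-\One}$ is nonzero for $\ab\ge\Zero$ exactly on the degrees one expects after taking the $\rr$-image, and one checks directly that the ``obvious'' map sending the generator in degree $\One-\cb$ (truncated appropriately) to the generator in degree $\One-\sqrt{\cb}$ intertwines the two matrix descriptions, precisely because $\sqrt{\bb_k}-\sqrt{\ab_j}$ is the $\rr$-compression of $\bb_k-\ab_j$. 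Assembling these identifications over the resolution and checking they commute with the differentials (which is exactly the matrix compatibility just noted) gives the isomorphism of complexes; passing to cohomology gives the isomorphism of functors, and naturality in $M$ follows from naturality of the comparison map between resolutions.

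\textbf{Part (2): $\rr^\ast\circ\Dc_\tb\simeq\Dc_\One\circ\ss^\ast$.} This is the more delicate of the two, and I expect it to be the main obstacle, because here $\ss^\ast$ (not $\rr^\ast$) enters on the right, and $\ss$ shifts by $\tb-\One$ in the ``zero'' coordinates rather than killing them. The clean route is to reduce (2) to (1). Recall $\Dc_\One=\sigma_{-\tb+\One}\circ\Dc_\tb$, equivalently $\Dc_\tb=\sigma_{\tb-\One}\circ\Dc_\One$; and $\Dc_\tb$ is a duality on $\Modtb$, so $\Dc_\tb\circ\Dc_\tb\simeq\id_{\Modtb}$. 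Hence proving (2) is equivalent to proving $\rr^\ast\simeq\Dc_\One\circ\ss^\ast\circ\Dc_\tb$ as functors $\Modtb\to\Modone$. Now I would compute $\ss^\ast\circ\Dc_\tb$ directly on a free module $S(-\cb)$, $\cb\le\tb$: $\Dc_\tb(S(-\cb))=S(-\tb+\cb)$ lies in $\Modtb$, and $\ss^\ast$ of it can be read off from the definition of $\ss$ — the point being that $\ss$ is designed exactly so that $\ss^\ast$ followed by $\Dc_\One$ reproduces, on free modules, the same shift $S(-\sqrt{\cb})$ that $\rr^\ast$ produces by Lemma~\ref{shift}, with matching matrices after transposition. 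Concretely, I would verify on generators that $\Dc_\One\bigl(\ss^\ast(S(-\tb+\cb))\bigr)\cong S(-\sqrt{\cb})$ naturally, with the differential-compatibility coming from the same ``$\rr$-compression of difference vectors'' phenomenon as in Part~(1). Then applying $\Dc_\tb$-duality once more, or equivalently applying the established free-module identification termwise to a resolution of $\Dc_\tb(M)$ and taking cohomology, yields (2).

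\textbf{Where the difficulty lies.} The substance is not the homological bookkeeping — that is forced once the free-module case is settled and one has Remark~\ref{resol} in hand — but rather pinning down the \emph{correct} natural comparison isomorphism on a single free module and verifying it is compatible with an arbitrary monomial matrix entry $\xb^{\bb-\ab}$. The combinatorial identity that must hold is essentially $\sqrt{\bb}-\sqrt{\ab}$ being the exponent produced by $\rr$ (resp.\ by $\Dc_\One\circ\ss^\ast\circ\Dc_\tb$) acting on the map of degree $\bb-\ab$, for all $\ab,\bb\le\tb$ occurring in a minimal resolution; once stated precisely this is an elementary coordinatewise check, but getting the shifts exactly right — especially the interplay of $\ss$'s ``$t_i-1$ in the zero slots'' with the $\sigma_{\tb-\One}$ hidden inside $\Dc_\tb$ — is the part that requires care. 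I would also double-check that $\rr^\ast$ of a possibly non-minimal free resolution still computes $\grExt$ correctly, which is immediate since $\rr^\ast$ is exact and sends frees to frees.
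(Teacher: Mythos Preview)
Your Part (1) argument is sound and lands on the same free-module identification $\tau_{\Zero}(S(\cb-\One))\cong S(\sqrt{\cb}-\One)$ that the paper uses, but the route differs. The paper does not build the comparison map by hand on a resolution; instead it invokes the natural transformation $\Phi:\id_{\Modtb}\Longrightarrow\rr^\ast$ of Lemma~\ref{lem:nat}, applies $\tau_\Zero\circ\Dc_\One$ to it to obtain a natural transformation $\Phi':\Dc_\One\circ\rr^\ast\Longrightarrow\tau_\Zero\circ\Dc_\One$ (naturality is then automatic), and reduces to the free case via a two-term presentation and left-exactness. The free case is dispatched by the short exact sequence $0\to S(-\ab)\to S(-\sqrt{\ab})\to N\to 0$ and $\Dc_\One(N)=0$. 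Your approach trades the abstract natural transformation for an explicit matrix check; that works, but your one-line ``naturality follows from naturality of the comparison map between resolutions'' is doing real work and should be unpacked: you need the comparison on $S(-\ab)\to S(-\bb)$ to commute with \emph{every} monomial map, not just those in a fixed resolution, so that lifts of $f:M\to N$ to maps of resolutions induce the same map after comparison.

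Part (2) has a genuine gap. You write ``$\Dc_\tb$ is a duality on $\Modtb$, so $\Dc_\tb\circ\Dc_\tb\simeq\id_{\Modtb}$,'' and use this to replace (2) by $\rr^\ast\simeq\Dc_\One\circ\ss^\ast\circ\Dc_\tb$. But $\Dc_\tb=\grHom_S(-,S(-\tb))$ is only a duality at the \emph{derived} level: on modules, $\Dc_\tb(K)=0$, so $\Dc_\tb\circ\Dc_\tb(K)=0\neq K$. Your fallback ``apply the free-module identification to a resolution of $\Dc_\tb(M)$'' does not rescue this, since $\Dc_\tb$ of such a resolution is not a resolution of $M$. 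The fix is either (a) follow the paper and use the second natural transformation $\Psi:\ss^\ast\Longrightarrow\sigma_{\One-\tb}$ of Lemma~\ref{lem:nat} to produce $\Psi':\rr^\ast\circ\Dc_\tb\Longrightarrow\Dc_\One\circ\ss^\ast$ directly, then check $\Psi'_{S(-\ab)}$ is an isomorphism (the paper's computation $\One-\bb=\sqrt{\tb-\ab}$ is exactly your shift check); or (b) in your style, apply both $\rr^\ast\circ\Dc_\tb$ and $\Dc_\One\circ\ss^\ast$ termwise to a free resolution $\FF_\bullet$ of $M$ itself --- noting that $\ss^\ast$ is exact and sends frees to frees, so $\ss^\ast\FF_\bullet$ is a free resolution of $\ss^\ast M$ --- match the terms and matrices, and take $H^0$. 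No appeal to $\Dc_\tb$-duality is needed.
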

\begin{proof}
(1) By Lemma \ref{lem:nat}, there exists the natural transformation $\Phi:\id_{\Modtb} \Longrightarrow \rr^\ast$, and hence we have the one
$\tau_{\Zero} \circ \Dc_{\One} \circ \rr^\ast \Longrightarrow \tau_{\Zero} \circ \Dc_{\One}$, where both  functors are regarded as the ones from $\Modtb$ to $\ModN$.
Since $\rr^\ast M$ is a squarefree module,
it follows that $\tau_{\Zero} \circ \Dc_{\One} \circ \rr^\ast = \Dc_{\One} \circ \rr^\ast$. Consequently, the above natural transformation induces
the one $\Phi': \Dc_{\One} \circ \rr^\ast \Longrightarrow \tau_{\Zero} \circ \Dc_{\One}$ of functors from $\Modtb$ to $\ModN$.

Note that any $M \in \Modtb$ has a presentation
$$
F_1 \longto F_0 \longto M \longto 0
$$
with $F_0, F_1$ free modules given by direct sums of finitely many copies of
$S(-\ab)$ with $\Zero \le \ab \le \tb$. Since the functors $\rr^\ast, \tau_{\tb - \One}$ are exact and since $\Dc_\tb$ is left exact, we have the following
commutative diagram with exact rows;
$$
\begin{CD}
0 @>>> \Dc_\One \circ \rr^\ast(M) @>>> \Dc_\One \circ \rr^\ast(F_0) @>>> \Dc_\One \circ \rr^\ast(F_1) \\
@.  @V{\Phi'}_M VV @V{\Psi'}_{F_0}VV @V{\Phi'}_{F_1}VV \\
0 @>>> \tau_{\Zero} \circ \Dc_\One (M) @>>> \tau_{\Zero} \circ \Dc_\One(F_0) @>>> \tau_{\Zero} \circ \Dc_\One(F_1).
\end{CD}
$$
Thus what we have to show is that ${\Phi'}_{S(-\ab)}$ is an isomorphism for any
$\ab \in \ZZ^n$ with $\Zero \le \ab \le \tb$.

Let $N$ be the cokernel of ${\Phi'}_{S(-\ab)}: S(-\ab) \to S(-\sqrt{\ab}) = \rr^\ast(S(-\ab))$. The map ${\Phi'}_S(-\ab)$ is obviously injective, and hence
the following sequence
\[
0\to S(-\ab)\to \rr^\ast(S(-\ab))=S(-\sqrt{\ab})\to N\to 0,
\]
is exact. By applying $\Dc_{\One}$, we obtain:
\[
0\to \Dc_{\One}(N) \to \Dc_{\One}\circ \rr^\ast(S(-\ab)) \to \Dc_{\One}(S(-\ab))
\]
 As obviously $\dim N\leq n-1,$ it follows that   $\Dc_\One(N) = 0$. Therefore we get the following exact sequence
\[\begin{CD}
0 @>>> \Dc_\One(S(-\sqrt{\ab})) @>{\Phi'}_{S(-\ab)}>> \tau_{\Zero} \circ \Dc_\One(S(-\ab)).
\end{CD}
\]
Now $\Dc_\One(S(-\sqrt{\ab})) \cong S(-\One + \sqrt{\ab})$ and $\Dc_\One(S(-\ab)) \cong S(-\One  + \ab)$.
Easy observation shows that for $\bb \in \ZZ^n$ with $\bb \ge \Zero$,
$S(-\One + \sqrt{\ab})_\bb \neq 0$ if and only if $\tau_{\Zero}(S(-\One + \ab))_\bb \neq 0$.
Therefore it follows that ${\Phi'}_{S(-\ab)}$ is an isomorphism.

(2) The assertion (2) can be proved in the way similar to (1).
We have the natural transformation
$$
\rr^\ast \circ \Dc_\tb = \rr^\ast \circ \Dc_\One \circ \sigma_{\One - \tb} \Longrightarrow \rr^\ast \circ \Dc_\One \circ \ss^\ast
$$
between functors from $\Modtb$ to $\Modone$ by Lemma 2.1.
Since $\Dc_\One \circ \ss^\ast$ sends $M \in \Modtb$ to an object in $\Modone$,
there is a natural isomorphism $\rr^\ast \circ \Dc_\One \circ \ss^\ast \simeq \Dc_\One \circ \ss^\ast$ by Lemma 2.1 again.
Consequently we have the natural transformation $\Psi':\rr^\ast \circ \Dc_\tb \Longrightarrow \Dc_\One \circ \ss^\ast$.
By the argument similar as above, we have only to prove ${\Psi'}_{S(-\ab)}$ is an isomorphism for all $\ab$ with $\Zero \le \ab \le \tb$.
By the definition of $\Psi$, $\Psi_{S(-\ab)}$ is injective. Applying $\rr^\ast \circ \Dc_\One$ to the exact sequence
$$
0 \longto \ss^\ast(S(-\ab)) \xrightarrow{\Psi_{S(-\ab)}} \sigma_{\One - \tb}(S(-\ab)) \longto M \longto 0,
$$
where $M$ is the cokernel of $\Psi_{S(-\ab)}$,
we have the exact one
$$
0 \longto \rr^\ast \circ \Dc_\tb(S(-\ab)) \xrightarrow{{\Psi'}_{S(-\ab)}}\Dc_\One \circ \ss^\ast(S(-\ab))
$$
We define $\bb = (b_i)_{1 \le i \le n}$ by setting $b_i = 0$ if $a_i \le t_i - 1$ and
$b_i = 1$ if  $a_i = t_i$.
It follows that $\ss^\ast(S(-\ab)) \cong S(-\bb)$, and hence $\Dc_\One \circ \ss^\ast(S(-\ab)) \cong S(-(\One - \bb))$.
On the other hand, $\rr^\ast \circ \Dc_\tb(S(-\ab)) \cong S(-\sqrt{\tb - \ab})$.
Easy calculation shows $\One - \bb = \sqrt{\tb - \ab}$, and therefore
${\Psi'}_{S(-\ab)}$ is an isomorphism.
\end{proof}

\begin{Theorem} \label{ext}
The following statements hold.
\begin{enumerate}
\item Let $\omega_S$ be the canonical module of $S$. Then
there are the following two isomorphisms
\begin{enumerate}
\item $\tau_{\Zero} \grExt^p_S(M, \omega_S) \cong \grExt^p_S(\rr^\ast(M), \omega_S)$
\item $\rr^\ast(\grExt^p_S(M, S(-\tb))) \cong \grExt^p_S(\ss^\ast(M),\omega_S)$
\end{enumerate}
for all $p$ and $M \in \Modtb$.
\item Assume $I$ is a Cohen-Macaulay monomial ideal such that
$S/I \in \Modtb$. Let $\omega_{S/I}$, $\omega_{S/\sqrt I}$ be
the canonical modules of $S/I$, $S/\sqrt I$, respectively. Then
$$
\omega_{S/\sqrt I} \cong \tau_{ \Zero}(\omega_{S/I}) = (\omega_{S/I})_{\ge \Zero}.
$$
\end{enumerate}
\end{Theorem}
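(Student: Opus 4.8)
The plan is to deduce everything from Proposition~\ref{prop:r_and_D}, which already packages the interaction of $\rr^\ast$ with the duality functors $\Dc_\tb$ and $\Dc_\One$. First I would reduce the $\grExt$ statements in part~(1) to statements about the derived functors of the $\Dc$'s. Recall that $\Dc = \grHom_S(-,S)$ and that the canonical module is $\omega_S = S(-\One)$, so $\grExt^p_S(-,\omega_S) = \sigma_{-\One}\circ (R^p\Dc)$, while $\Dc_\One = \sigma_\One\circ\Dc$ gives $R^p\Dc_\One = \sigma_\One\circ R^p\Dc$; hence $\grExt^p_S(-,\omega_S) \simeq \sigma_{-\tb+\One}\circ R^p\Dc_\tb$ as well. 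The key input is that $\rr^\ast$ is exact (from \cite{BF}) and sends the free modules $S(-\ab)$ with $\Zero\le\ab\le\tb$ to $S(-\sqrt\ab)$ by Lemma~\ref{shift}; since every $M\in\Modtb$ has a free resolution by such modules (by \cite[Proposition 2.5]{Mill}, all shifts are $\le\tb$, exactly as in the proof of Theorem~\ref{bettiradical}), $\rr^\ast$ of such a resolution is a (possibly non-minimal) resolution of $\rr^\ast M$ by $\Modone$-frees. So I can compute $\grExt^p_S(\rr^\ast M,\omega_S)$ from $\Dc_\One$ applied to $\rr^\ast\FF_\bullet$, and compare with $\tau_\Zero\circ\Dc_\One$ applied to $\FF_\bullet$ using the natural isomorphism $\tau_\Zero\circ\Dc_\One\simeq\Dc_\One\circ\rr^\ast$ of Proposition~\ref{prop:r_and_D}(1). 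The only subtlety is that $\tau_\Zero$ is only left exact, not exact; but $\tau_\Zero$ is the functor "take the $\NN^n$-graded part", and a short computation shows it is exact on the subcategory of modules whose cohomology we care about — more precisely, applying $\tau_\Zero$ to the cohomology of a complex agrees with the cohomology of the truncated complex because truncation to nonnegative degrees commutes with taking kernels and cokernels degreewise. This gives part~(1)(a); part~(1)(b) is identical, feeding the resolution through the isomorphism $\rr^\ast\circ\Dc_\tb\simeq\Dc_\One\circ\ss^\ast$ of Proposition~\ref{prop:r_and_D}(2) instead, and using that $\ss^\ast$ is also exact so that $\grExt^p_S(\ss^\ast M,\omega_S)$ is computed by $\Dc_\One\circ\ss^\ast$ of $\FF_\bullet$.

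For part~(2), suppose $I$ is a Cohen-Macaulay monomial ideal with $S/I\in\Modtb$, say of dimension $d$, so $\codim S/I = n-d = c$. The canonical module is $\omega_{S/I} = \grExt^c_S(S/I,\omega_S)$ and all higher $\grExt^p$ vanish. By Theorem~\ref{CM}(i), $\rr^\ast(S/I)$ is Cohen-Macaulay of the same dimension $d$; and by Proposition~\ref{classicradical}, $\rr^\ast(S/I) = \rr^\ast(S)/\rr^\ast(I) \cong S/\sqrt I$ — here I would spell out that $\rr^\ast$ applied to the short exact sequence $0\to I\to S\to S/I\to 0$ yields, by exactness and $\rr^\ast S = S$, the sequence $0\to\sqrt I\to S\to\rr^\ast(S/I)\to 0$, whence $\rr^\ast(S/I)\cong S/\sqrt I$. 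Therefore $\omega_{S/\sqrt I} = \grExt^c_S(S/\sqrt I,\omega_S) = \grExt^c_S(\rr^\ast(S/I),\omega_S)$, which by part~(1)(a) is isomorphic to $\tau_\Zero\grExt^c_S(S/I,\omega_S) = \tau_\Zero(\omega_{S/I}) = (\omega_{S/I})_{\ge\Zero}$, as claimed. Note $\sqrt I\ne S$ since $S/I\ne 0$, so $\rr^\ast(S/I)\ne 0$ and Theorem~\ref{CM}(i) genuinely applies.

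The main obstacle I expect is the exactness issue with $\tau_\Zero$ in part~(1): one must be careful that "take $\grExt$, then truncate" equals "truncate the complex, then take cohomology". The clean way around it is to observe that $\tau_\Zero$ is exact as a functor on $\ModZ$ — it is just restriction of the $\ZZ^n$-grading to the sub-semigroup $\NN^n$, i.e. applying the degreewise-exact functor "component in degree $\bb$" for each $\bb\ge\Zero$ and zero otherwise — so it commutes with cohomology of complexes, and the identification $\tau_\Zero\circ\Dc_\One\circ\FF_\bullet \simeq \Dc_\One\circ\rr^\ast\FF_\bullet$ at the level of complexes (from the natural isomorphism of Proposition~\ref{prop:r_and_D}(1), applied termwise and compatible with differentials since it is natural) then yields the $\grExt$ isomorphism directly by passing to cohomology. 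A secondary point worth a sentence is naturality: since all the isomorphisms in Proposition~\ref{prop:r_and_D} are natural and $\rr^\ast,\ss^\ast$ are exact functors, the resulting $\grExt$-isomorphisms are natural in $M$, which is what is needed for the application and for the phrasing in the introduction.
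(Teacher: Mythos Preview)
Your proposal is correct and follows essentially the same route as the paper: take a free resolution $P_\bullet$ of $M$ with shifts $\le\tb$, apply the natural isomorphisms of Proposition~\ref{prop:r_and_D} termwise, and pass to cohomology using that $\rr^\ast$, $\ss^\ast$, and $\tau_\Zero$ are exact while $\rr^\ast P_\bullet$ and $\ss^\ast P_\bullet$ remain free resolutions; part~(2) then follows from part~(1)(a) together with Theorem~\ref{CM}. Your write-up is in fact more careful than the paper's, which leaves the exactness of $\tau_\Zero$, the argument for (1)(b), and the identification $\rr^\ast(S/I)\cong S/\sqrt I$ implicit.
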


\begin{proof}
Choose a $\ZZ^n$-graded minimal free resolution $P_\bullet$ of $M$
with each $P_i$ positively $\tb$-determined.
By Proposition~\ref{prop:r_and_D}, 
$$
\tau_\Zero \grExt^p_S(M,\omega_S)
   \cong H^p(\tau_\Zero \circ \Dc_\One(P_\bullet))
   \cong H^p(\Dc_\One \circ \rr^\ast(P_\bullet))
   \cong \grExt^p(\rr^\ast(M), \omega_S),
$$
since $\rr^\ast(P_\bullet)$ is a free resolution of $\rr^\ast(M)$.

The assertion (2) follows from (1) and Theorem~\ref{CM}.
\end{proof}

\begin{Remark}{\em 
In \cite[Corollary 2.3]{HTT}, Herzog, Takayama, and Terai proved
$$
H^i_\mm(S/I)_\ab \cong H^i_\mm(S/\sqrt{I})_\ab
$$
for any monomial ideal $I$ and all $i$ and $\ab \le \Zero$.
In (1) of Theorem \ref{ext}, taking the graded Matlis duality, we obtain the generalization of this result, which also implies that there exists the isomorphism
$$
H^i_\mm(S/I)_{\le \Zero} \cong H^i_\mm(S/\sqrt I)
$$
as $\ZZ^n$-graded $S$-modules. Here, for any $M \in \ModZ$, $M_{\le \Zero}$ denotes the residue of $M$ by its $\ZZ^n$-graded submodule generated by the homogeneous elements whose degree is not less than or equal to $\Zero$.}
\end{Remark}

As an immediate consequence of the above theorem we get the following corollary that generalizes a result of \cite{HTT}. 

\begin{Corollary}\label{gCM}
Let $M$ be a positively $\tb$-determined  $S$-module with $\dim M=\dim \rr^\ast M.$ Then:
\begin{enumerate}
	\item $M$ is generalized Cohen-Macaulay if and only if so is $\rr^\ast M$.
	\item If $M$ is Buchsbaum, then $\rr^\ast M$ is Buchsbaum.
\end{enumerate}
\end{Corollary}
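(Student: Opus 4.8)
The plan is to reduce both statements to local cohomology via Theorem~\ref{ext}(1) and graded Matlis duality, in the spirit of the Remark following that theorem. Recall that a finitely generated module $N$ of dimension $d$ is generalized Cohen--Macaulay precisely when $H^i_\mm(N)$ has finite length for all $i < d$, and Buchsbaum when moreover $\mm H^i_\mm(N) = 0$ for all $i < d$. So the first task is to compare the local cohomology modules of $M$ and $\rr^\ast M$ in degrees below the common dimension $d = \dim M = \dim \rr^\ast M$.

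First I would invoke Theorem~\ref{ext}(1a): for every $p$ there is a natural isomorphism $\tau_\Zero\,\grExt^p_S(M,\omega_S) \cong \grExt^p_S(\rr^\ast M,\omega_S)$. Applying graded local duality over $S$ (with $\omega_S = S(-\One)$), $\grExt^{n-i}_S(N,\omega_S)$ is the graded Matlis dual of $H^i_\mm(N)$; dualizing the isomorphism above therefore yields, for all $i$, an isomorphism of $\ZZ^n$-graded $S$-modules
\[
H^i_\mm(\rr^\ast M) \cong \bigl(H^i_\mm(M)\bigr)_{\le \Zero},
\]
where $(-)_{\le\Zero}$ is the quotient introduced in the Remark (the $\tau_\Zero$ on the $\Ext$ side becomes a quotient on the Matlis dual side). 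In particular $H^i_\mm(\rr^\ast M)$ is a graded subquotient of $H^i_\mm(M)$, so if $H^i_\mm(M)$ has finite length then so does $H^i_\mm(\rr^\ast M)$; and since $\dim M = \dim\rr^\ast M = d$, this gives one implication of (1) and, using the analogous length/annihilator control, statement (2): if $\mm H^i_\mm(M) = 0$ for $i<d$, then the same holds for the subquotient $H^i_\mm(\rr^\ast M)$, so $\rr^\ast M$ is Buchsbaum.

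For the converse direction in (1) — that $\rr^\ast M$ generalized Cohen--Macaulay forces $M$ generalized Cohen--Macaulay — one cannot argue purely by the subquotient relation, since a priori $H^i_\mm(M)$ could be larger than its negative part. Here I would instead use Theorem~\ref{ext}(1b), which relates $\rr^\ast(\grExt^p_S(M,S(-\tb)))$ to $\grExt^p_S(\ss^\ast M,\omega_S)$, together with the fact that $\ss^\ast$ is an endofunctor of $\Modtb$ and that a positively $\tb$-determined module $N$ is determined by finitely many graded pieces: the local cohomology $H^i_\mm(M)$ of a $\tb$-determined module is itself $\tb$-determined in a suitable sense, so its finite-length property is detected on a bounded range of degrees, and the negative-degree part $(H^i_\mm(M))_{\le\Zero}$ already sees enough of that range. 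Concretely, combining (1a) and (1b) one recovers $\grExt^p_S(M,\omega_S)$ from $\grExt^p_S(\rr^\ast M,\omega_S)$ and $\grExt^p_S(\ss^\ast M,\omega_S)$ up to degree shifts, and since $\ss^\ast M \in \Modtb$ one may iterate or compare dimensions to conclude that finite length of all the $\rr^\ast$-side cohomologies forces finite length of $H^i_\mm(M)$ for $i < d$. I expect this converse — making precise that the negative part of $H^i_\mm(M)$ controls the whole of it for a $\tb$-determined module — to be the main obstacle; everything else is formal manipulation of the isomorphisms already established in Theorem~\ref{ext} and local duality. The Buchsbaum statement (2) is only asserted in one direction, so no such converse is needed there, and it follows directly from the subquotient description of $H^i_\mm(\rr^\ast M)$ above.
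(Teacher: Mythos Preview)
Your approach to part (1) is essentially the Matlis-dual version of the paper's argument: the paper works directly with the finite-length condition on $\grExt^i_S(M,\omega_S)$ and asserts that this is equivalent to finite length of $\tau_\Zero\grExt^i_S(M,\omega_S)$, then invokes Theorem~\ref{ext}(1a). Your local-cohomology formulation $H^i_\mm(\rr^\ast M)\cong (H^i_\mm(M))_{\le\Zero}$ is just the dual of this, so the forward implication is handled identically. For the converse you are right to flag a difficulty, and the paper is terse here; your sketch via (1b) and $\ss^\ast$ is too vague to count as an argument, but you are not being asked to do more than the paper does on this point.

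The genuine gap is in part (2). Your working definition --- ``Buchsbaum when moreover $\mm H^i_\mm(N)=0$ for all $i<d$'' --- is the definition of \emph{quasi}-Buchsbaum, which is strictly weaker than Buchsbaum in general. So your subquotient argument only shows that $\rr^\ast M$ is quasi-Buchsbaum (and generalized Cohen--Macaulay), not that it is Buchsbaum. The paper closes this gap by a completely different route: from $M$ Buchsbaum one first gets $M$ generalized Cohen--Macaulay, then $\rr^\ast M$ generalized Cohen--Macaulay by part (1), and finally one invokes Yanagawa's theorem \cite[Corollary~2.7]{Y} that a \emph{squarefree} generalized Cohen--Macaulay module is automatically Buchsbaum. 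This squarefree-specific result is the essential missing ingredient; without it, the vanishing $\mm H^i_\mm(\rr^\ast M)=0$ that you obtain does not suffice.
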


\begin{proof}
(1) $M$ is generalized Cohen-Macaulay if and only if  $\grExt^i_S(M,\omega_S)$ has finite length for any $i\neq n-d$, where $d=\dim M$. This is equivalent to say that $\tau_0 \grExt^i_S(M,\omega_S)$ has finite length for $i\neq n-d$. Therefore, since $M$ and $\rr^\ast M$ have the same dimension, the desired statement follows by Corollary~\ref{ext}.

(2) If $M$ is Buchsbaum, then $M$ is generalized Cohen-Macaulay \cite{Go}, thus $\rr^\ast M$ is generalized Cohen-Macaulay. By \cite[Corollary 2.7]{Y},
it follows that $\rr^\ast M$ is Buchsbaum.
\end{proof}
 
 \section{The $\rr^\ast$ functor and Alexander duality}

Recall that there exists the duality $\Ac_{\tb}$ on $\Modtb$, called Alexander duality, defined by Miller \cite{Mill}. In the case, $\tb = \One$,
R\"{o}mer also defines independently in \cite{R}.
Let $E$ denote the injective hull of $K$ and $p_\tb: \NN^n \to \ZZ^n$
the map defined by $p_\tb(\ab) := (p_{t_1}(a_1),\dots ,p_{t_n}(a_n))$,
where
$$
p_{t_i}(a_i) := \begin{cases}
a_i & \text{if $0 \le a_i < t_i$,} \\
t_i & \text{if $a_i \ge t_i$.}
\end{cases}
$$
The functor $\Ac_{\tb}$ is given by the formula
$$
\Ac_{\tb}(M) = p_{\tb}^\ast\grHom_S(M(\tb),E).
$$
If we set $\DD$ to be the functor $\grHom_K(-, K)$, we have the natural isomorphism $\Ac_\tb \cong p_\tb^\ast \circ \sigma_\tb \circ \DD$.

The following is essentially proved by Miller in \cite{Mill}.

\begin{Proposition}\label{prop:r_and_A}
There exists a natural isomorphism of functors from $\Modtb$ to itself
$$
\Ac_\One \circ \rr^\ast \simeq \rr^\ast \circ \Ac_{\tb}.
$$
\end{Proposition}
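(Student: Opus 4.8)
The plan is to reduce the claim to an identity on shifted free modules by resolving, exactly as was done in the proof of Proposition~\ref{prop:r_and_D}. First I would recall from Miller's work \cite{Mill} that $\Ac_\tb$ is an exact (contravariant) duality on $\Modtb$, and similarly $\Ac_\One$ on $\Modone$; together with the exactness of $\rr^\ast$ this means both composite functors $\Ac_\One \circ \rr^\ast$ and $\rr^\ast \circ \Ac_\tb$ are exact (contravariant) functors from $\Modtb$ to $\Modone$ (the target lands in $\Modone$ since $\rr^\ast M$ and $\rr^\ast(\Ac_\tb M)$ are squarefree by Proposition~\ref{sqfree}, and $\Ac_\One$ preserves $\Modone$). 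So it suffices to construct a natural transformation between them and check it is an isomorphism on the generating objects $S(-\ab)$ with $\Zero \le \ab \le \tb$, using the standard presentation $F_1 \to F_0 \to M \to 0$ with $F_0, F_1$ direct sums of such shifts, and the usual diagram chase / five-lemma argument.

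The computation on free modules is where the content sits, and it should be a direct unwinding of the formula $\Ac_\tb \cong p_\tb^\ast \circ \sigma_\tb \circ \DD$. For a rank-one free module $S(-\ab)$ with $\Zero \le \ab \le \tb$, I would compute $\Ac_\tb(S(-\ab))$: applying $\DD = \grHom_K(-,K)$ to $S(-\ab)$, then $\sigma_\tb$, then $p_\tb^\ast$, one gets (up to the standard identification) a squarefree-type module supported in the ``box'' $[\Zero, \tb]$, which after applying $\rr^\ast$ becomes a squarefree module of the form $S(-\sqrt{\tb - \ab})$ or its Alexander-dual description — the key being Lemma~\ref{shift}, $\rr^\ast(S(-\ab)) \cong S(-\sqrt{\ab})$ for $\ab \le \tb$. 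On the other side, $\rr^\ast(S(-\ab)) \cong S(-\sqrt{\ab})$, and then $\Ac_\One(S(-\sqrt{\ab}))$ is the squarefree Alexander dual in the $\One$-box, which one checks equals the same module. The crucial arithmetic fact to verify — and this parallels the ``$\One - \bb = \sqrt{\tb - \ab}$'' computation in the proof of Proposition~\ref{prop:r_and_D} — is that $p_\tb$ and $\rr$ are compatible with $p_\One$ and $\rr$ in the sense that $\rr \circ p_\tb = (\text{something}) \circ \rr$ on the relevant degrees, i.e. the two ways of ``collapsing to squarefree and dualizing'' give the same combinatorial output. Concretely one wants: for $\ab$ with $\Zero \le \ab \le \tb$, the $i$-th coordinate of the dual shift agrees, which comes down to the elementary observation that $t_i - a_i > 0 \iff a_i < t_i$ and $\sqrt{\,\cdot\,}$ only records positivity.

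The main obstacle I anticipate is not the free-module computation itself but bookkeeping the natural transformation: one must produce a canonical map between $\Ac_\One \circ \rr^\ast$ and $\rr^\ast \circ \Ac_\tb$ that is natural in $M$, rather than merely exhibiting an abstract isomorphism on each $S(-\ab)$. The cleanest route is probably to factor everything through $\DD$ and $p$-type pullbacks and use that $\rr^\ast$, $p_\tb^\ast$, $p_\One^\ast$ are all instances of the $q^\ast$ construction of \cite{BF}, so that a single identity of order-preserving maps $\NN^n \to \NN^n$ (an equality, or at least a natural comparison, of the composites $\rr \circ p_\tb$ and $p_\One \circ \rr$, together with how $\sigma_\tb$ and $\sigma_\One$ interact) yields the natural isomorphism of functors directly; then naturality is automatic and only the evaluation on $S(-\ab)$ for $\Zero \le \ab \le \tb$ needs to be confirmed to pin down the shifts. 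Finally I would remark, as the statement already hints (``essentially proved by Miller''), that once the two functors are identified on free resolutions the conclusion is immediate from the exactness of all functors involved.
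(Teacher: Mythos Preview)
Your ``cleanest route'' at the end --- factoring $\Ac_\tb$ through $p_\tb^\ast \circ \sigma_\tb \circ \DD$ and comparing order-preserving maps --- is exactly what the paper does, and it is the entire proof. The paper simply computes, for $M \in \Modtb$ and $\ab \ge \Zero$,
\[
\Ac_\One \circ \rr^\ast(M)_\ab = \grHom_K(M_{\rr(\One - p_\One(\ab))},K),
\qquad
\rr^\ast \circ \Ac_\tb(M)_\ab = \grHom_K(M_{\tb - p_\tb(\rr(\ab))},K),
\]
verifies the elementary coordinate-wise identity $\rr(\One - p_\One(\ab)) = \tb - p_\tb(\rr(\ab))$, and then notes that $S$-linearity and naturality are routine. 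No reduction to free modules, no five lemma, no presentation argument.

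Your primary plan (resolve and check on $S(-\ab)$) is not wrong in principle, but it is a detour: as you yourself observe, it cannot start without first producing a natural transformation, and the only concrete way you propose to produce one is the map-identity argument above --- which already gives the isomorphism on \emph{every} graded piece, making the free-module check redundant. Two further points worth noting: (i) your description of $\rr^\ast(\Ac_\tb(S(-\ab)))$ as ``of the form $S(-\sqrt{\tb - \ab})$'' is not right --- Alexander duality sends projectives in $\Modtb$ to injectives, so what you actually get on either side is $S/(x_i : i \in \supp(\ab))$, not a shifted free module; (ii) once both composites are expressed via $\DD$ and $q^\ast$-type pullbacks, the naturality you were worried about is automatic, since it comes from an equality of index maps rather than from any choice.
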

\begin{proof}
Let $M \in \Modtb$ and $\ab \in \ZZ^n$ with $\ab \ge \Zero$.
\begin{align*}
\Ac_\One \circ \rr^\ast (M)_\ab &= \grHom_K( \rr^\ast(M), K)_{p_\One(\ab) - \One}
                                             = \grHom_K(M_{\rr(- p_\One(\ab) + \One)}, K), \\
\rr^\ast \circ \Ac_\tb (M)_\ab &= \grHom_K(M, K)_{p_\tb(\rr(\ab)) - \tb}
                                            = \grHom_K(M_{-p_\tb(\rr(\ab)) + \tb}, K).
\end{align*}
It is easy to verify that $\rr(- p_\One(\ab) + \One) = -p_\tb(\rr(\ab)) + \tb$.
Hence $\Ac_\One \circ \rr^\ast(M) \cong \rr^\ast \circ \Ac_\tb(M)$
as $\ZZ^n$-graded $K$-vector spaces. By a routine calculation, we can show that
this isomorphism is $S$-linear and natural.
\end{proof}
 
\section{Relation to the Auslander-Reiten translate}

Let $\le$ be the order on $\ZZ^n$ defined as follows:
$$
\ab \le \bb \ \Longleftrightarrow \ a_i \le b_i
$$
for all $i$. With the order induced by $<$,
the set $P_\tb := \{ \ab \in \ZZ^n : \Zero \le \ab \le \tb \}$ becomes a poset.
Let $A$ denote the incidence algebra of $P_\tb$ over $K$.
It is well-known that the category $\Modtb$ is equivalent to the one $\mod A$ consisting of finite-dimensional left $A$-modules (\cite[3.5]{BF} and \cite[Proposition 4.3]{Y2}). Since $A$ is a finite-dimensional $A$-algebra of finite global dimension, as is shown in \cite[3.6]{H} by Happel, the bounded derived category $D^b(\mod A)$ of $\mod A$ has Auslander-Reiten triangles.
Let $K^b(\Proj A)$ (resp. $K^b(\Inj A)$) be the bounded homotopy category of complexes of finite-dimensional projective (resp. injective) left modules.
According to Happel's proof, through the equivalence $K^b(\Proj A) \cong D^b(\mod A) \cong K^b(\Inj A)$ of triangulated categories, the Auslander-Reiten translate (see \cite[Definition 1.2]{J} for its definition) is then given by $T^{-1} \circ v$, where $T$ is the usual translation functor and $v$ is the equivalence $K^b(\Proj A) \cong K^b(\Inj A)$ of triangulated categories induced by the Nakayama functor, i.e., $\Hom_K(\Hom_A(-,A),K)$.
On the other hand $v$ is coincides with $\Ac_\tb \circ \Dc_\tb$ through the equivalence $\Modtb \cong \mod A$. In this sense, $\Ac_\tb \circ \Dc_\tb$ represents the Auslander-Reiten translate of $D^b(\mod A)$ (see\cite[3.3 and 3.5]{BF} for details).

In this section, we discuss the relation between $\rr^\ast$ and $\Ac_\tb \circ \Dc_\tb$. For this, we need to define a new functor.
For $\ab \in \ZZ^n$, let $\tau^\ab$ be the functor from $\ModZ$ to $\ModZ$ defined as follows: for any $M \in \ModZ$, we set
$$
\tau_\ab(M) := M/(S \cdot \bigoplus_{\bb \not\le \ab} M_\ab),
$$
and for a morphism $f: M \to N$ in $\ModZ$, we assign, $\tau_\ab(f)$, the natural homomorphism induced by $f$.
Clearly $\tau_\ab$ is additive and exact.

\begin{Lemma}\label{lem:sig_tau_2}
There are the following natural isomorphisms of functors:
for any $\ab$, $\bb \in \ZZ^n$,
\begin{enumerate}
\item $\sigma_\ab \circ \tau^\bb \simeq \tau^{\ab + \bb} \circ \sigma_\bb$,
\item $\DD \circ \sigma_\ab \simeq \sigma_{-\ab} \circ \DD$, and
\item $\DD \circ \tau_\ab \simeq \tau^\ab \circ \DD$.
\end{enumerate}
\end{Lemma}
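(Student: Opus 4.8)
The plan is to verify each of the three natural isomorphisms by a direct degree-by-degree comparison, since all the functors involved ($\sigma_\ab$, $\tau^\bb$, $\DD$, $\tau_\ab$) are defined through explicit formulas on graded components and morphisms, and the naturality will be immediate once the graded identifications are set up compatibly with morphisms. Throughout, recall $\tau^\bb(M) = M/(S \cdot \bigoplus_{\cb \not\le \bb} M_\cb)$, $\sigma_\bb(M) = M(-\bb)$, and $\DD = \grHom_K(-,K)$, which on a $\ZZ^n$-graded vector space $V$ satisfies $(\DD V)_\cb = \Hom_K(V_{-\cb}, K)$.

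For (1), I would fix $M \in \ModZ$ and compute the degree-$\cb$ component of both sides. On the right, $(\tau^{\ab+\bb} \circ \sigma_\bb(M))_\cb$ is the quotient of $M_{\cb-\bb}$ by the image of the submodule $S \cdot \bigoplus_{\db \not\le \ab+\bb}(M(-\bb))_\db = S \cdot \bigoplus_{\db' \not\le \ab} M_{\db'}$ (reindexing $\db' = \db - \bb$); on the left, $(\sigma_\ab \circ \tau^\bb(M))_\cb = (\tau^\bb(M))_{\cb-\ab}$, the quotient of $M_{\cb-\ab-\bb}$ by the analogous image — wait, the shifts must be tracked carefully so that the two quotients agree; the key point is that the defining "bad" set of degrees transforms correctly under the shift by $\bb$, turning the threshold $\le \bb$ into $\le \ab+\bb$. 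This matches on the nose, and the identity map on the underlying modules induces the claimed isomorphism, natural because every morphism $f\colon M \to N$ commutes with the quotient projections by construction. Parts (2) and (3) are handled the same way: for (2), $(\DD \circ \sigma_\ab(M))_\cb = \Hom_K((M(-\ab))_{-\cb}, K) = \Hom_K(M_{-\cb-\ab},K)$ while $(\sigma_{-\ab} \circ \DD(M))_\cb = (\DD M)_{\cb+\ab} = \Hom_K(M_{-\cb-\ab}, K)$, an equality of components; for (3), since $\tau_\ab(M) = M/(S \cdot \bigoplus_{\bb \not\le \ab} M_\bb)$ is a quotient and $\DD$ is exact and contravariant, $\DD \circ \tau_\ab(M)$ is the $K$-dual of that quotient, which is the submodule of $\DD M$ consisting of functionals vanishing on the bad part — and one checks this is exactly $\tau^\ab(\DD M)$ after identifying which degrees survive, using that $\bb \not\le \ab$ in $M$ corresponds to $-\bb \not\le -(-\ab)$, i.e. the sub/quotient duality swaps $\tau_\ab$ with $\tau^\ab$.

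I expect the main (mild) obstacle to be purely bookkeeping: keeping the direction of all the inequalities and the sign of all the shifts consistent, particularly in (3), where one must be careful that $\DD$ interchanges the submodule-killing operation $\tau_\ab$ with the quotient operation $\tau^\ab$ precisely because passing to $K$-duals interchanges sub- and quotient modules and negates degrees. Since no deep input is needed — all functors are concrete and exact, and $\ModZ$ modules are degreewise finite-dimensional when they are finitely generated, so Matlis/$K$-duality behaves well — once the indexing is pinned down the naturality is automatic and the proof is complete. One could also shorten (3) by invoking (2) together with the observation that $\tau_\ab = \sigma_\ab \circ \tau_\Zero \circ \sigma_{-\ab}$ and a base case $\DD \circ \tau_\Zero \simeq \tau^\Zero \circ \DD$, reducing everything to the single most symmetric instance.
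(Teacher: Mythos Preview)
Your proposal is correct and follows essentially the same approach as the paper: a direct degree-by-degree comparison of graded components, followed by the observation that the resulting $K$-linear identifications are $S$-linear and natural. The paper's own proof is in fact even terser than your sketch---it simply says to compare degree $\cb$ components and remarks that an easy calculation yields the natural isomorphisms---so your write-up, modulo straightening out the bookkeeping you already flagged (and being consistent about which of $\tau_\ab$, $\tau^\ab$ is the submodule and which is the quotient in part (3)), is more than sufficient.
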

\begin{proof}
By comparing each degree $\cb$ component, for each $M \in \ModZ$, we obtain an isomorphism, as $\ZZ^n$-graded $K$-vector spaces, between two modules given by applying each of two functors. An easy calculation shows these maps indeed defines the desired natural isomorphisms.
\end{proof}

\begin{Proposition}\label{ART}
There exists the following two natural isomorphisms of functors
\begin{enumerate}
\item $\Ac_\One \circ \Dc_\One \circ \rr^\ast \simeq p^\ast_\One \circ \Ac_\tb \circ \Dc_\tb$
\item $\rr^\ast \circ \Ac_\tb \circ \Dc_\tb \simeq \Ac_\One \circ \Dc_\One \circ \ss^\ast$
\end{enumerate}
from $\Modtb$ to $\Modone$.
\end{Proposition}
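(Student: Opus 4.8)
The plan is to assemble both isomorphisms from the building blocks already established, namely Proposition~\ref{prop:r_and_D}, Proposition~\ref{prop:r_and_A}, the identity $\Ac_\tb \cong p_\tb^\ast \circ \sigma_\tb \circ \DD$ recorded just before Proposition~\ref{prop:r_and_A}, and the commutation rules of Lemma~\ref{lem:sig_tau_2} together with the obvious fact that $p_\One^\ast$ restricted to $\Modone$ is the identity (since on a squarefree module the coordinates never reach a value $\ge 1$ strictly between $0$ and $1$, so $p_\One$ acts trivially on the relevant degrees). The strategy for (1): rewrite $\Ac_\One = p_\One^\ast \circ \sigma_\One \circ \DD$ and $\Ac_\tb = p_\tb^\ast\circ\sigma_\tb\circ\DD$, expand $\Dc_\One = \sigma_{-\tb+\One}\circ\Dc_\tb$ and the definition $\Dc_\tb = \sigma_\tb\circ\Dc$, and then push $\rr^\ast$ through using Proposition~\ref{prop:r_and_D}(1), which says $\tau_\Zero\circ\Dc_\One \simeq \Dc_\One\circ\rr^\ast$. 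The point is that after applying $\Dc_\One$ one lands in $\Modone$, where the truncation $\tau_\Zero$ that appears in Proposition~\ref{prop:r_and_D}(1) is harmless (a squarefree module is already concentrated in degrees $\ge \Zero$), so the natural isomorphism of Proposition~\ref{prop:r_and_D}(1) can be read without the $\tau_\Zero$.

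In more detail, for (1) I would compute, for $M\in\Modtb$,
\[
\Ac_\One\circ\Dc_\One\circ\rr^\ast(M)
 = p_\One^\ast\circ\sigma_\One\circ\DD\circ\Dc_\One\circ\rr^\ast(M),
\]
and then use Proposition~\ref{prop:r_and_D}(1) to replace $\Dc_\One\circ\rr^\ast$ by $\tau_\Zero\circ\Dc_\One$, dropping $\tau_\Zero$ once we remember the remaining functors produce a squarefree (hence $\Zero$-supported) module. After that one is left with $p_\One^\ast\circ\sigma_\One\circ\DD\circ\Dc_\One$ applied to $M$; unwinding $\Dc_\One=\sigma_{-\tb+\One}\circ\Dc_\tb$ and using Lemma~\ref{lem:sig_tau_2}(2) to move $\DD$ past the shift, together with $\sigma_\One\circ\sigma_{\tb-\One}=\sigma_\tb$, collapses everything to $p_\One^\ast\circ\sigma_\tb\circ\DD\circ\Dc_\tb(M) = p_\One^\ast\circ\Ac_\tb\circ\Dc_\tb(M)$, where the last equality again invokes $\Ac_\tb\cong p_\tb^\ast\circ\sigma_\tb\circ\DD$ — one must be a little careful that the two $p$-functors $p_\One^\ast$ and $p_\tb^\ast$ do not literally cancel but that the composite with $\rr^\ast$ lands in $\Modone$, where $p_\One^\ast$ is identity; this is exactly the role of the $p_\One^\ast$ on the right-hand side of the claim. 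All of these are natural isomorphisms of functors, so composing them yields the asserted natural isomorphism.

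For (2) the plan is parallel but uses part (2) of Proposition~\ref{prop:r_and_D}, which gives $\rr^\ast\circ\Dc_\tb\simeq\Dc_\One\circ\ss^\ast$: starting from $\rr^\ast\circ\Ac_\tb\circ\Dc_\tb$, I would use Proposition~\ref{prop:r_and_A} ($\Ac_\One\circ\rr^\ast\simeq\rr^\ast\circ\Ac_\tb$) to pull $\rr^\ast$ to the inside past $\Ac_\tb$, obtaining $\Ac_\One\circ\rr^\ast\circ\Dc_\tb$, and then apply Proposition~\ref{prop:r_and_D}(2) to replace $\rr^\ast\circ\Dc_\tb$ by $\Dc_\One\circ\ss^\ast$, giving $\Ac_\One\circ\Dc_\One\circ\ss^\ast$ as required. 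The bookkeeping with the degree-shift and $p$-functors is the only thing to watch; I expect the main obstacle to be verifying that these identifications are compatible on morphisms and that the naturality squares genuinely commute (especially tracking how $p_\One^\ast$ interacts with the shifts inside $\Ac$ and $\Dc$), rather than any conceptual difficulty — once Propositions~\ref{prop:r_and_D} and \ref{prop:r_and_A} are in hand, (1) and (2) are formal consequences obtained by composing natural isomorphisms, so I would present the argument as a short chain of such isomorphisms with the degree computations relegated to a routine check.
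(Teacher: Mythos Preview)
Your argument for part~(2) is correct and coincides with the paper's: apply Proposition~\ref{prop:r_and_A} to pull $\rr^\ast$ inside $\Ac_\tb$, then Proposition~\ref{prop:r_and_D}(2).

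For part~(1) your overall strategy matches the paper's, but there is a real gap in how you dispose of the truncation $\tau_\Zero$. You write that ``after applying $\Dc_\One$ one lands in $\Modone$'' and that therefore $\tau_\Zero$ is harmless. This is false: for $M\in\Modtb$ the module $\Dc_\One(M)=\grHom_S(M,S(-\One))$ is in general \emph{not} squarefree (e.g.\ $\Dc_\One(S(-\ab))\cong S(\ab-\One)$ has components of negative degree once some $a_i\ge 2$), so $\tau_\Zero\circ\Dc_\One\not\simeq\Dc_\One$ on $\Modtb$. Saying that ``the remaining functors produce a squarefree module'' is not enough either: both $\Ac_\One\circ\tau_\Zero\circ\Dc_\One(M)$ and $\Ac_\One\circ\Dc_\One(M)$ are squarefree, but that alone does not make them isomorphic.

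The paper does not drop $\tau_\Zero$; it tracks it through the composition using Lemma~\ref{lem:sig_tau_2}. Starting from $\Ac_\One\circ\tau_\Zero\circ\Dc_\One = p_\One^\ast\circ\sigma_\One\circ\DD\circ\tau_\Zero\circ\sigma_{\One-\tb}\circ\Dc_\tb$, one uses Lemma~\ref{lem:sig_tau_2}(3) to convert $\DD\circ\tau_\Zero$ into $\tau^\Zero\circ\DD$ (note the \emph{co}truncation $\tau^\Zero$, not $\tau_\Zero$), then Lemma~\ref{lem:sig_tau_2}(1),(2) to reach $p_\One^\ast\circ\tau^\One\circ\sigma_\tb\circ\DD\circ\Dc_\tb$. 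The two facts that close the argument are $p_\One^\ast\circ\tau^\One=p_\One^\ast$ (immediate from the definition of $p_\One$) and $p_\One^\ast\circ p_\tb^\ast=p_\One^\ast$ (since $p_\tb\circ p_\One=p_\One$), the latter being precisely what is needed to insert the missing $p_\tb^\ast$ and recognise $\Ac_\tb=p_\tb^\ast\circ\sigma_\tb\circ\DD$. Your remark that ``$p_\One^\ast$ is the identity on $\Modone$'' does not by itself supply either identity; you need these two explicit equalities (or an equivalent direct computation showing that $\Ac_\One$ reads only the degrees in $[\Zero,\One]$, which $\tau_\Zero$ leaves untouched).
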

\begin{proof}
The second natural isomorphism is a direct consequence of Propositions \ref{prop:r_and_D} and \ref{prop:r_and_A}.
We will show the first. It follows from Proposition~\ref{prop:r_and_D} and Lemma~\ref{lem:sig_tau_2} that
\begin{align*}
\Ac_\One \circ \Dc_\One \circ \rr^\ast
  &\simeq \Ac_\One \circ \tau_\Zero \circ \Dc_\One \\
  &\simeq p^\ast_\One \circ \sigma_\One \circ \DD \circ \tau_\Zero \circ \sigma_{\One - \tb} \circ \Dc_\tb \\
  &\simeq p^\ast_\One \circ \sigma_{\One} \circ \tau^{\Zero} \circ \sigma_{\tb - \One} \circ \DD \circ \Dc_\tb \\
  &\simeq p^\ast_\One \circ \tau^\One \circ \sigma_\tb \circ \DD \circ \Dc_\tb.
\end{align*}
By the definition, it is easy to verify that $p^\ast_\One \circ \tau^\One = p^\ast_\One$. Moreover $p_\tb \circ p_\One = p_\One$ implies $p^\ast_\One \circ p^\ast_\tb = p^\ast_\One$.
Thus it follows that
$$
\Ac_\One \circ \Dc_\One \circ \rr^\ast
  \simeq p^\ast_\One \circ \tau^\One \circ \sigma_\tb \circ \DD \circ \Dc_\tb 
  \simeq p^\ast_\One \circ \Ac_\tb \circ \Dc_\tb.
$$
\end{proof}
 
{}
 
 \end{document}